\documentclass[12pt]{article}
\usepackage{enumerate}
\usepackage{amsmath,amssymb,amsthm, mathrsfs, mathtools}
\usepackage{latexsym}
\usepackage{graphics,graphicx}
\usepackage{hyperref}
\usepackage[bf, small]{titlesec}
\usepackage{authblk} 
\usepackage{soul}

\usepackage{lineno}


\setlength{\topmargin}{-0.5cm}
\setlength{\oddsidemargin}{0.2cm}
\setlength{\evensidemargin}{0.2cm}
\setlength{\textheight}{21cm}
\setlength{\textwidth}{16cm}
\setlength{\footskip}{2cm}
\setlength{\columnsep}{1cm}


\theoremstyle{plain}
\newtheorem{Thm}{Theorem}[section]
\newtheorem{XThm}{Theorem}
\newtheorem{Lem}[Thm]{Lemma}
\newtheorem{Prop}[Thm]{Proposition}
\newtheorem{Cor}[Thm]{Corollary}

\theoremstyle{definition}

\newtheorem{Rmk}[Thm]{Remark}

\usepackage{standalone}
\usepackage{tikz}
\usetikzlibrary{arrows,positioning,matrix,fit,backgrounds,shapes,shapes.geometric, intersections}

\usetikzlibrary{shapes.callouts,decorations.pathmorphing} 
\usetikzlibrary{shadows} 
\usepackage{calc} 
\usetikzlibrary{decorations.markings} 
\tikzstyle{vertex}=[circle, draw, inner sep=0pt, minimum size=6pt] 
\newcommand{\vertex}{\node[vertex]}
\usetikzlibrary{arrows,matrix} 
\parskip=0.5ex




\title{Competitively orientable complete multipartite graphs}

\author[1]{\small Myungho Choi}
\author[1]{\small Minki Kwak}
\author[1]{\small Suh-Ryung Kim}

\affil[1]{\footnotesize Department of Mathematics Education, Seoul National University, Seoul 08826}
\affil[ ]{\footnotesize\textit{nums8080@naver.com, limpkmk@naver.com, srkim@snu.ac.kr}}
\date{}
\begin{document}
\maketitle
\begin{abstract}
    We say that a digraph $D$ is competitive if any pair of vertices has a common out-neighbor in $D$ and that a graph $G$ is competitively orientable if there exists a competitive orientation of $G$. The notion of competitive digraphs arose while studying digraph whose competition graphs are complete.
    We derive some useful properties of competitively orientable graphs and show that a complete graph of order $n$ is competitively orientable if and only if $n \geq 7$.
    Then we completely characterize a competitively orientable complete multipartite graph in terms of the sizes of its partite sets.
  Moreover, we present a way to build a competitive multipartite tournament in each of competitively orientable cases.
    \end{abstract}

    \noindent
{\it Keywords.} competitive digraph; competitively orientable graph; complete multipartite graph; competitive multipartite tournament; competition graph.

\noindent
{{{\it 2010 Mathematics Subject Classification.} 05C20, 05C75}}

\section{Introduction}
In this paper, for graph-theoretical terminology and notations not defined, we follow \cite{bondy}. We consider finite simple graphs.
For a digraph $D$,
the \emph{underlying graph} of $D$
is the graph $G$
such that $V(G)=V(D)$ and $E(G)=\{ uv \mid (u,v) \in A(D) \}$.
An \emph{orientation} of a graph $G$
is a digraph having no directed $2$-cycles, no loops, and no multiple arcs
whose underlying graph  is $G$.

We say that two vertices {\it compete} in a digraph $D$ if they have a common out-neighbor in $D$ and that a digraph $D$ is {\it competitive} if any pair of vertices competes in $D$.
A graph $G$ is said to be {\it competitively orientable} if there exists a competitive  orientation of $G$.
For example, the complete graph $K_7$ is completely orientable as shown in Figure~\ref{fig:7-tournament-complete}.
\begin{figure}
\begin{center}
 \begin{tikzpicture}[auto,thick]
       \tikzstyle{player}=[minimum size=5pt,inner sep=0pt,outer sep=0pt,draw,circle]

    \tikzstyle{player1}=[minimum size=2pt,inner sep=0pt,outer sep=0pt,fill,color=black, circle]
    \tikzstyle{source}=[minimum size=5pt,inner sep=0pt,outer sep=0pt,ball color=black, circle]
    \tikzstyle{arc}=[minimum size=5pt,inner sep=1pt,outer sep=1pt, font=\footnotesize]
    \path (90:1.5cm)     node [player]  (v0)
    {};
    \path (40:1.5cm)     node [player]  (v1)  {};
    \path (-10:1.5cm)     node [player]  (v2){};
    \path (-60:1.5cm)     node [player]  (v3){};
 \path (140:1.5cm)     node [player]  (v6){};
  \path (190:1.5cm)     node [player]  (v5){};
    \path (240:1.5cm)     node [player]  (v4){};

   \draw[black,thick,-stealth] (v0) - +(v1);
 \draw[black,thick,-stealth] (v0) - +(v2);
  \draw[black,thick,-stealth] (v0) - +(v4);

  \draw[black,thick,-stealth] (v1) - +(v2);
  \draw[black,thick,-stealth] (v1) - +(v3);
  \draw[black,thick,-stealth] (v1) - +(v5);

  \draw[black,thick,-stealth] (v2) - +(v3);
  \draw[black,thick,-stealth] (v2) - +(v4);
  \draw[black,thick,-stealth] (v2) - +(v6);

  \draw[black,thick,-stealth] (v3) - +(v4);
  \draw[black,thick,-stealth] (v3) - +(v0);
  \draw[black,thick,-stealth] (v3) - +(v5);

  \draw[black,thick,-stealth] (v4) - +(v1);
  \draw[black,thick,-stealth] (v4) - +(v5);
  \draw[black,thick,-stealth] (v4) - +(v6);

  \draw[black,thick,-stealth] (v5) - +(v2);
  \draw[black,thick,-stealth] (v5) - +(v0);
  \draw[black,thick,-stealth] (v5) - +(v6);

  \draw[black,thick,-stealth] (v6) - +(v1);
  \draw[black,thick,-stealth] (v6) - +(v3);
  \draw[black,thick,-stealth] (v6) - +(v0);

    \end{tikzpicture}
 \end{center}
 \caption{A competitive orientation of $K_7$}
\label{fig:7-tournament-complete}
\end{figure}
By the way, we deduce Theorem~\ref{thm:making-multi-complete}
which guarantees $K_n$ being competitively orientable for $n \geq 7$ from $K_7$ being competitively orientable.
Yet, $K_n$ is not competitively orientable for any integer $2 \le n \le 6$ by Theorem~\ref{thm:char-orient-graph}(3).

The notions of competitive digraph and competitive orientation arose during a research on competition graphs of complete multipartite graphs.
The {\it competition graph} of a digraph $D$ is defined as the graph with the vertex set $V(D)$ and an edge $uv$ if and only if $u$ and $v$ compete in $D$.
Competition graphs arose in
connection with an application in ecology (see \cite{Cohen})
and also have applications in coding,
radio transmission, and modeling of complex economic systems.
Early literature of the study on competition graphs is summarized
in the survey papers by Kim~\cite{kim1993competition} and Lundgren~\cite{lundgren1989food}.
The competition graphs of tournaments and those of bipartite tournaments
have been actively studied
(see \cite{cho2002domination}, \cite{choi20171},  \cite{eoh2019niche},
  \cite{eoh2020m},
 \cite{factor2007domination},
  \cite{fisher2003domination},
  \cite{fisher1998domination}, and \cite{kim2016competition}
for papers related to this topic).

By the definition of competition graph, it is easy to see that a digraph is competitive if and only if its competition graph is a complete graph.
On the other hand, the competition graph of a digraph $D$ being complete may be rephrased as: The adjacency matrix of a digraph $D$ is ``scrambling".
A matrix $A$ is said to be {\it scrambling} if for any pair of indices $i$, $j$, there exists $k$ such that $A_{ik}\neq 0$ and $A_{jk} \neq 0$.
Scrambling matrices were first defined in \cite{hajnal1958weak} to study weak ergodicity of inhomogeneous Markov chains.

Kim and Lee \cite{lee2009competition} studied acyclic digraphs whose competition graphs consist of only complete components.

In this paper, we completely characterize a competitively orientable complete multipartite graph in terms of the sizes of its partite sets.
We first show that there is no competitively orientable complete bipartite graph (Corollary~\ref{cor:no-biparte-complete}).
Then we show that for each integer $k \ge 7$, any $k$-partite complete graph is competitively orientable (Proposition~\ref{prop:k>=7-complete}).
Next we characterize competitively orientable complete $6$-partite graphs as follows.
\begin{XThm} \label{thm:complete-6-partite}
  Let $n_1,\ldots,n_6$ be positive integers such that $n_1 \geq \cdots \geq n_6$.
Then a complete $6$-partitie graph $K_{n_1,n_2,\ldots,n_6}$ is competitively orientable if and only if one of the following holds: (a) $n_1\geq5$ and $n_2=1$; (b) $n_1 \geq 3$, $n_2 \geq 2$, and $n_3=1$; (c) $n_3\geq 2$.
  \end{XThm}
  The remaining cases are also completely taken care of in the following manner.
\begin{XThm} \label{thm:complete-3-partite}
  Let $n_1$, $n_2$, and $n_3$ be positive integers such that $n_1 \geq n_2 \geq n_3$.
  Then a complete tripartite graph $K_{n_1,n_2,n_3}$ is competitively orientable if and only if $n_1\geq5$ and $n_3 \geq 4$.
  \end{XThm}
  \begin{XThm} \label{thm:complete-4-partite}
    Let $n_1,\ldots,n_4$ be positive integers such that $n_1 \geq \cdots \geq n_4$.
    Then a complete $4$-partite graph $K_{n_1,n_2,n_3,n_4}$ is competitively orientable if and only if one of the following holds:
    (a) $n_1 \geq 4$, $n_3 \geq 3$, and $n_4 = 1$;
    (b)  $n_1 \geq 4$, $n_3 =2$, and $n_4 =2$;
    (c) $ n_3 \geq 3$ and $n_4 \geq 2$.
    \end{XThm}
  \begin{XThm} \label{thm:complete-5-partite}
    Let $n_1,\ldots,n_5$ be positive integers such that $n_1 \geq n_2 \geq \cdots \geq n_5$. Then a complete $5$-partite graph $K_{n_1,n_2,\ldots,n_5}$ is competitively orientable if and only if one of the following holds:   (a) $n_1=3$, $n_2=3$, $n_3 \geq 2$, and $n_4=1$;
    (b) $n_1\geq 4$, $n_3 \geq 2$, and $n_4 =1$;
    (c) $n_4 \geq 2$.
    \end{XThm}

    A \emph{tournament} is an orientation of a complete graph.
A \emph{$k$-partite tournament} is  an orientation of a complete $k$-partite graph for some positive integer $k \geq 2$.
If a digraph is a $k$-partite tournament for some integer $k \geq 2$, then it is called a \emph{multipartite tournament}.
Multipartite tournaments have been actively studied by  graph theorists (see \cite{figueroa2012acyclic},  \cite{figueroa2016strong}, \cite{galeana2011k}, \cite{guo2012weakly}, and a survey paper \cite{volkmann2007multipartite}).

We make a useful observation that any complete multipartite graph containing a competitively orientable complete multipartite graph as a subgraph is competitively orientable (Corollary~\ref{lem:making-multi-complete}).
Thanks to this observation, showing a complete multipartite graph $K_{n_1,n_2,\ldots,n_k}$ is competitively orientable becomes much simpler: once we present a deliberately designed concrete competitive multipartite tournament as a base, the proposition guarantees that it will expand to a competitive multipartite tournament with partite sets of sizes $n_1,n_2,\ldots, n_k$.

In Section~\ref{sec:preliminaries}, we derive some properties of competitively orientable graphs which are useful in proving our main results.
In Section~\ref{sec:6-partite}, we study structure of competitive $6$-partite tournament to prove Theorem~\ref{thm:complete-6-partite}.
In Section~\ref{sec:3-partite}, we deal with competitive tripartite tournaments and prove Theorem~\ref{thm:complete-3-partite}.
In Section~\ref{sec:4,5-partite}, we prove Theorems~\ref{thm:complete-4-partite} and \ref{thm:complete-5-partite}.
In Section~\ref{sec:completecompetition}, we summarize the main results in the aspect of vertices of  the competition graph of a competitive multipartite tournament (Theorem~\ref{Thm:summary:K_n-k-partite}).

\section{Preliminaries} \label{sec:preliminaries}
In this section, we derive properties of competitive digraphs and competitively orientable graphs, and develop tools to prove our mains results.
\subsection{Competitively orientable graphs}
Given a digraph $D$, we denote by $N^+_D(x)$ the set of out-neighbors of a vertex $x$ in $D$ and by $N^-_D(x)$ the set of in-neighbors of a vertex $x$ in a digraph $D$. If no confusion is likely, we omit the subscript $D$.

Given a digraph $D$ and a vertex $u$ of $D$,
we add a new vertex $v$ and arcs to $D$ including the arcs in the set $\{(v,w) \mid (u,w) \in A(D)\}$.
We call the resulting digraph
a {\it digraph competitively expanded from $D$ via $u$ by $v$}.

\begin{Prop} \label{prop:adding-vertex}
Given a nontrivial competitive digraph $D$ and a vertex $u$ in $D$, each digraph competitively expanded from $D$ via $u$ by a new vertex $v$ is competitive.
\end{Prop}

\begin{proof}
Let $D'$ be a digraph competitively expanded from $D$ via $u$ by a new vertex $v$.
By the hypothesis, $A(D)\subset A(D')$ and $N^+_D(u) \subseteq N^+_{D'}(v)$.
Take two vertices $x$ and $y$ in $D'$.
If $x \neq v $ and $y \neq v$, then $x$ and $y$ compete in $D$ and so in $D'$. By symmetry, now we suppose $x=v$.
Then $y \neq v$, so $y \in V(D)$.
Since $D$ is a nontrivial competitive digraph, $N^+_D(y) \cap N^+_D(u) \neq \emptyset$.
Then $N^+_{D'}(x) \cap N^+_{D'}(y)=N^+_{D'}(v) \cap N^+_{D'}(y) \supseteq N^+_D(u) \cap N^+_D(y) \neq \emptyset$ and so $x$ and $y$ compete in $D'$.
\end{proof}

\begin{Prop} \label{prop:char-competi-digraph}
Let $D$ be a competitive digraph.
Then the following are true:
\begin{enumerate}[{(1)}]

 \item if $D$ is a nontrivial digraph and has a vertex $v$ of indegree at most $1$, then $D-v$ is competitive;

\item if $D_v$ is the subdigraph of $D$ induced by $N^+(v)$ for a vertex $v$ in $D$, then each vertex has outdegree at least one in $D_v$;

\item each vertex in $D$ has outdegree at least $3$, especially, if a vertex $u$ has outdegree $3$ in $D$,
then its out-neighbors form a directed cycle;

\item there exist at least $\max \{4|V(D)|-|A(D)|,0\}$ vertices of outdegree $3$ in $D$.
\end{enumerate}
\end{Prop}
\begin{proof}
The statement (1) is obviously true.

To show the statement (2),
take a vertex $v$ in $D$.
Let $D_v$ is the subdigraph of $D$ induced by $N^+(v)$.
If there exists an out-neighbor $w$ of $v$ which has out-degree $0$ in $D_v$, then $v$ and $w$ cannot compete in $D$, a contradiction.
The statement (3) is an immediate consequence of the statement (2).

Let $l$ be the number of vertices of outdegree $3$.
Since each vertex in $D$ has outdegree at least $3$ by the statement (3),
\[4(|V(D)|-l)+3l \leq |A(D)|.\]
Therefore $4|V(D)|-|A(D)| \leq l$.
Thus the statement (4) is true.
\end{proof}

The following is an immediate consequence of Proposition~\ref{prop:char-competi-digraph}(2).

\begin{Cor} \label{cor:no-biparte-complete}
There is no competitive bipartite tournament.
\end{Cor}

Proposition~\ref{prop:char-competi-digraph}
may be rephrased as graph version in the following.

\begin{Thm} \label{thm:char-orient-graph}
Let $G$ be a competitively orientable graph.
Then the following are true:
\begin{enumerate}[{(1)}]
\item each vertex in $G$ has at least three neighbors, especially, if a vertex has exactly three neighbors, then its neighbors form a clique;

\item if $G$ is nontrivial and has a vertex $v$ of degree at most $4$,
then $G-v$ is a competitively orientable graph;

\item $|V(G)| \geq 7$ and $|E(G)| \geq 3 |V(G)|$.

\end{enumerate}
\end{Thm}
\begin{proof}
Let $D$ be a competitive orientation of $G$.
Then the statement (1) is immediately true by Proposition~\ref{prop:char-competi-digraph}(3).

To show the statement (2), suppose there exists a vertex $v$ of degree at most $4$.
Then, by Proposition~\ref{prop:char-competi-digraph}(3), $v$ has indegree at most $1$.
Therefore $D-v$ is competitive by Proposition~\ref{prop:char-competi-digraph}(1).
Thus $G-v$ is competitively orientable and so the statement (2) is true.

Since each vertex in $D$ has outdegree at least $3$ by Proposition~\ref{prop:char-competi-digraph}(3),
$|A(D)| \geq 3 |V(D)|$ and so $|E(G)| \geq 3 |V(G)|$.
By the way, since $G$ is simple, $|E(G)| \leq \binom{|V(G)|}{2}$.
Therefore
\[ 3 |V(G)| \leq \frac{|V(G)|(|V(G)|-1)}{2}.\]
Thus $|V(G)| \geq 7$.
\end{proof}

\begin{Rmk} \label{rmk:edges-vertices}
The inequality $|V(G)| \geq 7$ and $|E(G)| \geq 3 |V(G)|$ given in Theorem~\ref{thm:char-orient-graph} is tight. By the way, for each integer $m \geq 7$, there exists a competitively orientable graph of order $m$ with $3m$ edges.
\end{Rmk}
\begin{proof}
Take an integer $m \geq 7$ and a digraph $D$ given in Figure~\ref{fig:7-tournament-complete}, which is a competitive orientation of $K_7$.
We note that each vertex in $D$ has outdegree $3$.
Since $|E(K_7)|=21=3|V(K_7)|$, $K_7$ is the desired one for $m=7$.
Now we assume $m \geq 8$.
We begin with $D$ to construct a desired digraph.
Take a vertex $u$ in $D$.
Then $|N_{D}^+(u)|=3$.
Inductively, we identify $D_0$ with $D_0$ and competitively expand $D_i$ from $D_{i-1}$ via $u$ by a new vertex $v_i$ so that $N^+_{D_i}(v_i)=N^+_{D}(u)$ and $N^-_{D_i}(v_i)=\emptyset$ for each $1\leq i \leq m-7$.
Then $|A(D_i)|=|A(D)|+3i=3(7+i)=3|V(D_i)|$
and $D_i$ is competitive for each $1\leq i \leq m-7$ by Proposition~\ref{prop:adding-vertex}.
Therefore the underlying graph of $D_{m-7}$ is the desired one.
\end{proof}
We make a useful observation as follows.
\begin{Thm} \label{thm:making-multi-complete}
Let $G$ be a competitively orientable graph and $G'$ be a supergraph of $G$ such that for each vertex $v$ in $G'$, there exists a vertex $u$ in $G$ satisfying $N_G(u) \subset N_{G'}(v)$. Then $G'$ is also competitively orientable.
\end{Thm}
\begin{proof}
Suppose that $D$ is a competitive orientation of $G$.
If $V(G')= V(G)$, then each orientation $D'$ of $G'$ obtained by orienting edges in $E(G')\setminus E(G)$ arbitrarily so that $A(D) \subset A(D')$ is competitive.

Suppose $V(G') \neq V(G)$.
Then $V(G')\setminus V(G)=\{v_1,\ldots,v_k\}$ for a positive integer $k$. By the hypothesis, there exists a vertex $u_i$ in $G$ such that $N_G(u_i) \subset N_{G'}(v_i)$ for each $1\leq i \leq k $.
Let $G_0=G$, $D_0=D$, and $G_i=G'[V(G) \cup \{v_1,\ldots,v_i\}]$ for each $1\leq i \leq k $.
Then the orientation $D_i$ of $G_i$ obtained by orienting edges in $E(G_i)\setminus E(G_{i-1})$ arbitrarily as long as $A(D_{i-1}) \subset A(D_{i})$ and $N^+_{D_{i-1}}(u_{i}) \subset N^+ _{D_{i}}(v_{i})$ is competitive for each $1\leq i \leq k $ by Proposition~\ref{prop:adding-vertex}.
Therefore $D_k$ is a competitive orientation of $G'$.
\end{proof}

The following are immediate consequences of Theorem~\ref{thm:making-multi-complete}.
Especially, Corollary~\ref{lem:making-multi-complete} plays a key role throughout this paper.
\begin{Cor} \label{lem:making-multi-complete}
Let $k$ and $l$ be positive integers with $l \geq k \geq 3$; $n_1,\ldots, n_k$ be positive integers such that $n_1 \geq \cdots \geq n_k$;  $n'_1, \ldots ,n'_l$ be positive integers such that
$n'_1 \geq \cdots \geq n'_l$, $n'_1 \geq n_1$,
$n'_2 \geq n_2,\ldots$, and $n'_k \geq n_k$.
If $K_{n_1,\ldots,n_k}$ is competitively orientable,
then $K_{n'_1,\ldots,n'_l}$ is also competitively orientable.
\end{Cor}

\begin{Cor} \label{lem:dividing-partite-set}
Let $k$ be a positive integer with $k \geq 3$; $n_1, \ldots, n_k$,$n'_k,n'_{k+1}$ be positive integers such that $n_k=n'_k+n'_{k+1}$. If $K_{n_1,\ldots,n_{k-1},n_k}$ is competitively orientable, then $K_{n_1,\ldots,n_{k-1},n'_k,n'_{k+1}}$ is also competitively orientable.
\end{Cor}

\subsection{Competitive multipartite tournaments}

\begin{Prop} \label{prop:two-partite-out-neighbor-2-2}
Suppose that $D$ is a competitive multipartite tournament.
If the out-neighbors of a vertex $v$ are included in exactly two partite sets $U$ and $V$ of $D$,
then $|N^+(v) \cap U | \geq 2$ and $|N^+(v) \cap V | \geq 2$.
\end{Prop}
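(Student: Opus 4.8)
We want to show: if $v$'s out-neighbors lie in exactly two partite sets $U$ and $V$, then $v$ has at least two out-neighbors in each of $U$ and $V$. Suppose for contradiction that, say, $|N^+(v)\cap U| = 1$, so $N^+(v)\cap U = \{u\}$ for a single vertex $u$. The key idea: since $C(D)$ is complete, $u$ must be adjacent to everything in $C(D)$, so $u$ has a common out-neighbor with every other vertex; in particular, $u$ must have an out-neighbor. But I should look at who can compete with $u$ *through $v$*, or rather, who competes with $v$: every vertex $w \neq v$ shares an out-neighbor with $v$, and that out-neighbor lies in $N^+(v) \subseteq U \cup V$.

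**The plan.** Let me set $N^+(v)\cap U = \{u\}$. Consider an arbitrary vertex $w$ in the same partite set as $v$ (there may or may not be such a $w$), and more usefully consider vertices $w$ for which every common out-neighbor with $v$ must be $u$. Concretely: take any vertex $w\in V$ itself — wait, $w\in V\subseteq N^+(v)$, so $w$ is an out-neighbor of $v$; since $C(D)$ is complete, $v$ and $w$ compete, meaning they share an out-neighbor $z\in N^+(v)\cap N^+(w)$. Now $z\in N^+(v)=\{u\}\cup(N^+(v)\cap V)$, but $z\in N^+(w)$ and $w\in V$, so $z\notin V$ (no arcs within a partite set); hence $z=u$, giving $u\in N^+(w)$. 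Thus **every** vertex of $V$ has $u$ as an out-neighbor. Since $|N^+(v)\cap V|\geq 1$ (as $V$ genuinely contains out-neighbors of $v$), pick $w_0\in N^+(v)\cap V$; then $u\in N^+(w_0)$ as well. Now apply the same reasoning to $u$ and $w_0$: they are both in... no, $u\in U$ and $w_0\in V$ are in different partite sets, so that's fine, but the point is $u,v$ both dominate... hmm, let me instead use that $v$ and $u$ compete: they share an out-neighbor $z'\in N^+(v)\cap N^+(u)$, and $z'\in\{u\}\cup(N^+(v)\cap V)$; since $z'\neq u$ (no loops, and $z'\in N^+(u)$ would need $u\to u$), we get $z'\in N^+(v)\cap V$, so $z'\in V$ and $u\to z'$.

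**Deriving the contradiction.** So now I have: $u\to z'$ with $z'\in V$, and (from the first paragraph) every vertex of $V$ — in particular $z'$ — has $u$ as an out-neighbor, i.e. $z'\to u$. That gives a directed $2$-cycle between $u$ and $z'$, contradicting that $D$ is an orientation. So $|N^+(v)\cap U|\geq 2$, and by symmetry $|N^+(v)\cap V|\geq 2$. Let me double-check the step "every vertex of $V$ has $u$ as an out-neighbor": I took $w\in V$ arbitrary — but I need $w$ such that $v$ and $w$ compete, which holds for all $w\neq v$; and $w\in V\subseteq N^+(v)$ forces $w\neq v$ automatically. And the common out-neighbor $z$ of $v,w$ satisfies $z\in N^+(v)\subseteq U\cup V$ and $z\in N^+(w)$; if $z\in V$ then $w\to z$ within partite set $V$, impossible; so $z\in U$, hence $z\in N^+(v)\cap U=\{u\}$, so $z=u$ and $w\to u$. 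Good, clean.

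**Main obstacle.** The only real subtlety is making sure $V$ is nonempty as a source of out-neighbors and that the vertex $z'$ I produce from "$v,u$ compete" is genuinely in $V$ and distinct from $u$ — i.e. that I'm not accidentally using the same vertex twice or landing back on $u$. The $2$-cycle contradiction is the crux, and it hinges on getting both $u\to z'$ and $z'\to u$ for a single $z'\in V$; the first comes from "$v$ and $u$ compete" and the second from "$v$ and $z'$ compete" (since $z'\in V$). I should present it carefully so the reader sees that these two competition facts are independent and together forbidden. I'd write it as follows.

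\medskip

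Suppose, to the contrary, that $D$ is a multipartite tournament whose competition graph is complete, that the out-neighbors of a vertex $v$ are included in exactly two partite sets $U$ and $V$ of $D$, and that $|N^+(v) \cap U| \leq 1$. Since the out-neighbors of $v$ are included in $U$ and $V$ but in no other partite set of $D$, we have $N^+(v) \cap U \neq \emptyset$ and $N^+(v) \cap V \neq \emptyset$; thus $N^+(v) \cap U = \{u\}$ for some vertex $u \in U$.

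We first claim that every vertex of $V$ has $u$ as an out-neighbor. Take any vertex $w \in V$. Then $w \in N^+(v)$, so $w \neq v$, and since $C(D)$ is complete, $v$ and $w$ are adjacent in $C(D)$, that is, there exists $z \in N^+(v) \cap N^+(w)$. As $z \in N^+(v) \subseteq U \cup V$, either $z \in U$ or $z \in V$. If $z \in V$, then $w$ and $z$ lie in the same partite set of $D$ while $w \to z$, which is impossible. Hence $z \in U$, so $z \in N^+(v) \cap U = \{u\}$, giving $z = u$ and therefore $w \to u$. This proves the claim.

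Next, since $C(D)$ is complete and $u \in N^+(v)$ (so $u \neq v$), $v$ and $u$ are adjacent in $C(D)$, so there exists $z' \in N^+(v) \cap N^+(u)$. Now $z' \neq u$ because $D$ has no loops, and $z' \in N^+(v) = \{u\} \cup (N^+(v) \cap V)$, so $z' \in N^+(v) \cap V$; in particular $z' \in V$ and $u \to z'$. But by the claim, since $z' \in V$, we also have $z' \to u$. Thus $u$ and $z'$ form a directed $2$-cycle in $D$, contradicting the fact that $D$ is an orientation of a graph. Therefore $|N^+(v) \cap U| \geq 2$, and by the same argument with the roles of $U$ and $V$ interchanged, $|N^+(v) \cap V| \geq 2$.
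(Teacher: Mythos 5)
Your argument is essentially the paper's proof: you force the single vertex $u$ to be the common out-neighbor of $v$ with every out-neighbor of $v$ in $V$, then observe that the common out-neighbor $z'$ of $u$ and $v$ must lie in $N^+(v)\cap V$, and the two facts $u\to z'$ and $z'\to u$ give a forbidden directed $2$-cycle -- exactly the contradiction used in the paper.

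One slip to fix: your claim ``every vertex of $V$ has $u$ as an out-neighbor'' is justified by ``Take any vertex $w\in V$. Then $w\in N^+(v)$,'' which is false in general -- the hypothesis only says $N^+(v)\subseteq U\cup V$, not $U\cup V\subseteq N^+(v)$, and vertices of $V$ may well be in-neighbors of $v$. Fortunately you only ever apply the claim to $z'\in N^+(v)\cap V$, for which the argument is valid, so simply restate the claim for vertices of $N^+(v)\cap V$ (as the paper does) and the proof stands.
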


\begin{proof}
Suppose that there exists a vertex $v$ whose out-neighbors are included in exactly two partite sets $U$ and $V$ of $D$.
If
$N^+(v) \cap U = \{u\}$ for some vertex $u$ in $D$, then $u$ is a common out-neighbor of each vertex in $N^+(v) \cap V$ and $v$, and so $u$ has no out-neighbor in $N^+(v)$, which contradicts Proposition~\ref{prop:char-competi-digraph}(2).
Therefore $|N^+(v) \cap U| \geq 2$.
By symmetry, $|N^+(v) \cap V| \geq 2$.
Thus the statement is true.
\end{proof}

\begin{Lem} \label{lem:orientation-8-vertices}
For $k \in \{5,6\}$, if a competitive $k$-partite tournament $D$ of order $8$ has at least two vertices of outdegree at least $4$,
then $k=6$ and $D$ is an orientation of $K_{2,2,1,1,1,1}$
in which there exist exactly two vertices of outdegree at least $4$.
\end{Lem}

\begin{proof}
Suppose that a competitive $k$-partite $D$ has $8$ vertices at least two of which have outdegree at least $4$ for some $k \in \{5,6\}$.
Suppose $k=5$.
It is easy to check that the numbers of arcs in $K_{4,1,1,1,1},K_{3,2,1,1,1}$, and $K_{2,2,2,1,1}$ are $22,24$, and $25$, respectively.
Therefore $|A(D)|$ becomes maximum when $D$ is an orientation of $K_{2,2,2,1,1}$, so $|A(D)| \leq 25$.
By Proposition~\ref{prop:char-competi-digraph}(4), there exist at least $\max \{4|V(D)|-|A(D)|,0\}$ vertices of outdegree $3$ in $D$, so at least $7$ vertices have outdegree $3$ in $D$.
Therefore there exists at most one vertex of outdegree at least $4$, which is a contradiction to the hypothesis.
Thus $k=6$ and $D$ is an orientation of $K_{3,1,1,1,1,1}$ or $K_{2,2,1,1,1,1}$.
If $D$ is an orientation of $K_{3,1,1,1,1,1}$, then $|A(D)|=25$ and so, by the same reason, we reach a contradiction.
Thus $D$ is an orientation of $K_{2,2,1,1,1,1}$.
By the way, $D$ has $8$ vertices and $26$ arcs, so $4|V(D)|-|A(D)| =6$.
	Then there exist at least $6$ vertices of outdegree $3$ by Proposition~\ref{prop:char-competi-digraph}(4).
Therefore $D$ has exactly two vertices of outdegree at least $4$.
\end{proof}

\begin{Thm} \label{thm:complete-condition-outdegree-3}
Suppose that $D$ is a competitive $k$-partite tournament for some integer $k \in \{4,5,6\}$ which has a vertex $u$ of outdegree $3$.
Then $D$ contains a subdigraph isomorphic to the digraph $\tilde{D}$ in Figure~\ref{fig:subdigraph-degree3} and $|V(D)| \geq 9$. In particular, if $k=4$, then $|V(D)| \geq 10 $.
\end{Thm}

\begin{proof}
Each pair of vertices has a common out-neighbor in $D$ since  $D$ is competitive.
Let $N^+(u)=\{v_1,v_2,v_3\}$.
Without loss of generality, we may assume that $C:=v_1 \to v_2 \to v_3 \to v_1 $ is a directed cycle of $D$ by Proposition~\ref{prop:char-competi-digraph}(3).
Let $w_i$ be a common out-neighbor of $v_i$ and $v_{i+1}$ for each $1 \leq i \leq 3$ (identify $v_4$ with $v_1$).
If $w_j=w_k$ for some distinct $j,k \in \{1,2,3\}$,
then $N^+(u)=\{v_1,v_2,v_3\} \subseteq N^-(w_j)$ and so $w_j$ does not share a common out-neighbor with $u$, which is a contradiction.
Therefore $w_1$, $w_2$, and $w_3$ are all distinct.
Moreover, since $u$ and $w_i$ share a common out-neighbor for each $1\leq i \leq 3$,  $\{(w_1,v_3),(w_2,v_1),(w_3,v_2)\}\subset A(D)$.
Thus, so far, we have a subdigraph $\tilde{D}$ of $D$ with the vertex set $\{u,v_1,v_2,v_3,w_1,w_2,w_3\}$ given in Figure~\ref{fig:subdigraph-degree3}.

If $|V(D)|=7$, then
the underlying graph of $D$ must have $21$ edges by Theorem~\ref{thm:char-orient-graph}(3) and so $D$ is a $7$-partite tournament, which is a contradiction.
Thus $|V(D)| \geq 8$.
\begin{figure}
\begin{center}
\begin{tikzpicture}[x=1.0cm, y=1.0cm]
   \tikzset{->-/.style={decoration={
  markings,
  mark=at position #1 with {\arrow{>}}},postaction={decorate}}}

   \vertex (u) at (0,0) [label=above:$u$]{};
   \vertex (v2) at (2,0) [label=left:$v_2$]{};
   \vertex (v3) at (2,2) [label=above:$v_3$]{};
   \vertex (v1) at (2,-2) [label=below:$v_1$]{};
   \vertex (w2) at (4,0) [label=right:$w_2$]{};
    \vertex (w3) at (4,2) [label=above:$w_3$]{};
    \vertex (w1) at (4,-2) [label=below:$w_1$]{};
   \path
(u) edge [->-=0.97,thick] (v1)
(u) edge [->-=0.97,thick] (v3)
(u) edge [->-=0.97, bend left=30, thick] (v2)
(v1) edge [->-=0.97,bend left=15,thick] (v2)
(v3) edge [->-=0.97,thick] (w3)
(v2) edge [->-=0.97,thick] (w1)
(v2) edge [->-=0.97,thick] (w2)
(v3) edge [->-=0.97,thick] (w2)
(v1) edge [->-=0.97,thick] (w3)
(v1) edge [->-=0.97,thick] (w1)
(w3) edge [->-=0.97,thick] (v2)
(w2) edge [->-=0.97,thick] (v1)
(w1) edge [->-=0.97,thick] (v3)
   (v2) edge [->-=0.97,bend left=15,thick] (v3)
   (v3) edge [->-=0.97,bend left=20,thick] (v1)
   ;
   ;
\draw (3, -3) node{$\tilde{D}$};
\end{tikzpicture}
 \end{center}
  \caption{The subdigraph $\tilde{D}$ obtained in the proof of Theorem~\ref{thm:complete-condition-outdegree-3}}
  \label{fig:subdigraph-degree3}
\end{figure}
\begin{figure}
\begin{center}
\begin{tikzpicture}[x=1.0cm, y=1.0cm]
 \tikzset{->-/.style={decoration={
  markings,
  mark=at position #1 with {\arrow{>}}},postaction={decorate}}}
   \vertex (u) at (0,0) [label=above:$u$]{};
   \vertex (v2) at (2,0) [label=left:$v_2$]{};
   \vertex (v3) at (2,2) [label=above:$v_3$]{};
   \vertex (v1) at (2,-2) [label=below:$v_1$]{};
   \vertex (w2) at (4,0) [label=right:$w_2$]{};
    \vertex (w3) at (4,2) [label=above:$w_3$]{};
    \vertex (w1) at (4,-2) [label=below:$w_1$]{};
   \path
(u) edge [->-=0.97,thick] (v1)
(u) edge [->-=0.97,thick] (v3)
(u) edge [->-=0.97, bend left=30, thick] (v2)
(v1) edge [->-=0.97,bend left=15,thick] (v2)
(v3) edge [->-=0.97,thick] (w3)
(v2) edge [->-=0.97,thick] (w1)
(v2) edge [->-=0.97,thick] (w2)
(v3) edge [->-=0.97,thick] (w2)
(v1) edge [->-=0.97,thick] (w3)
(v1) edge [->-=0.97,thick] (w1)
(w3) edge [->-=0.97,thick] (v2)
(w2) edge [->-=0.97,thick] (v1)
(w1) edge [->-=0.97,thick] (v3)
   (v2) edge [->-=0.97,bend left=15,thick] (v3)
   (v3) edge [->-=0.97,bend left=20,thick] (v1)
(w1) edge [->-=0.97,bend right=40,thick] (w3)
    (w2) edge [->-=0.97,thick] (w1)
    (w3) edge [->-=0.97,thick] (w2)
	;
	;
\draw (3, -3) node{$D_1$};
\end{tikzpicture}
\hspace{2em}
\begin{tikzpicture}[x=1.0cm, y=1.0cm]
 \tikzset{->-/.style={decoration={
  markings,
  mark=at position #1 with {\arrow{>}}},postaction={decorate}}}
   \vertex (u) at (0,0) [label=above:$u$]{};
   \vertex (v2) at (2,0) [label=left:$v_2$]{};
   \vertex (v3) at (2,2) [label=above:$v_3$]{};
   \vertex (v1) at (2,-2) [label=below:$v_1$]{};
   \vertex (w2) at (4,0) [label=right:$w_2$]{};
    \vertex (w3) at (4,2) [label=above:$w_3$]{};
    \vertex (w1) at (4,-2) [label=below:$w_1$]{};
     \vertex (x) at (0.6,-1.5) [label=above:$x$]{};
   \path
(u) edge [->-=0.97,thick] (v1)
(u) edge [->-=0.97,thick] (v3)
(u) edge [->-=0.97, bend left=30, thick] (v2)
(v1) edge [->-=0.97,bend left=15,thick] (v2)
(v3) edge [->-=0.97,thick] (w3)
(v2) edge [->-=0.97,thick] (w1)
(v2) edge [->-=0.97,thick] (w2)
(v3) edge [->-=0.97,thick] (w2)
(v1) edge [->-=0.97,thick] (w3)
(v1) edge [->-=0.97,thick] (w1)
(w3) edge [->-=0.97,thick] (v2)
(w2) edge [->-=0.97,thick] (v1)
(w1) edge [->-=0.97,thick] (v3)
   (v2) edge [->-=0.97,bend left=15,thick] (v3)
   (v3) edge [->-=0.97,bend left=20,thick] (v1)
(w1) edge [->-=0.97,bend right=40,thick] (w3)
(x) edge [->-=0.97,thick] (v1)
	;
	;
\draw (3, -3) node{$D_2$};
\end{tikzpicture}
 \end{center}
  \caption{The subdigraphs $D_1$ and $D_2$ considered in the proof of Theorem~\ref{thm:complete-condition-outdegree-3}}
  \label{fig:subdigraph-degree3-example}
\end{figure}
To reach a contradiction, suppose that $|V(D)|=8$.
Then  $V(D)=V(\tilde{D}) \cup \{x\}$ for some vertex $x$ in $D$ and
 \begin{equation}
\label{eq:prop:complete-condition-outdegree-3-0}
|N^+(v_i)|=3 \mbox{ or } 4 \end{equation} for each $1\leq i \leq 3$.
Since $x$ and $u$ must compete and $N^+(u)=\{v_1,v_2,v_3\}$, one of $v_1$, $v_2$, $v_3$ is a common out-neighbor of $u$ and $x$.
Without loss of generality, we may assume $v_1$ is a common out-neighbor of $x$
and $u$.
Then \[N^+(v_1)=\{v_2,w_1,w_3\} \text{ and } (x,v_1)\in A(D).\]
By Proposition~\ref{prop:char-competi-digraph}(3), the out-neighbors of $v_1$ form a directed cycle.
Therefore
$\{v_1,v_2,v_3,w_1,w_3\}$ forms a $5$-tournament in $D$, so \[k \geq 5.\]
By the way, since $(w_3,v_2)$ and $(v_2,w_1)$ are arcs of $D$, \[(w_1,w_3)\in A(D)\]
(see the digraph $D_2$ given in Figure~\ref{fig:subdigraph-degree3-example} for an illustration).
Since $v_1$ and $w_2$ compete and $N^+(v_1)=\{v_2,w_1,w_3\}$,
\begin{equation}
\label{eq:prop:complete-condition-outdegree-3-2}N^+(w_2) \cap \{w_1,w_3\} \neq \emptyset. \end{equation}

 We first claim that $\{v_1,v_2,v_3,w_1,w_2,w_3\}$ forms a tournament in $D$.
  Since $D_2$ is a subgraph of $D$, we need to show that $\{w_1,w_2,w_3\}$ forms a tournament in $D$.
 		As we have shown that $\{v_1,v_2,v_3,w_1,w_3\}$ is a tournament in $D$, it remains to show that $w_2$ is adjacent to $w_1$ and $w_3$ in $D$.
 	
 	Suppose, to the contrary, that there is no arc between $w_1$ and $w_2$.
Then $(w_2,w_3) \in A(D)$ by~\eqref{eq:prop:complete-condition-outdegree-3-2}.
Then the vertices $v_1$, $w_2$, $w_3$ cannot form a directed cycle. Yet,  $v_1$, $w_2$, $w_3$ are out-neighbors of $v_3$, so $|N^+(v_3)|=4$ by \eqref{eq:prop:complete-condition-outdegree-3-0} and Proposition~\ref{prop:char-competi-digraph}(3).
Since $x$ is the only possible new out-neighbor of $v_3$ in $D$,
$N^+(v_3)=\{v_1,w_2,w_3,x\}$.
Since $x$ is the only possible common out-neighbor of $w_2$ and $v_2$, $N^+(w_2) \cap N^+(v_2)=\{x\}$.
Thus $N^+(v_2)=\{x,v_3,w_1,w_2\}$ and $\{v_1,w_3,x\} \subseteq N^+(w_2)$.
Since $v_2$ and $v_3$ have outdegree $4$, $D$ is an orientation of $K_{2,2,1,1,1,1}$ by Lemma~\ref{lem:orientation-8-vertices}.
Then $\{w_1,w_2\}$ forms a partite set of $D$.
Since $u$ has outdegree $3$ in $D$, $(w_2,u) \in A(D)$ and so
$\{u,v_1,w_3,x \} \subseteq N^+(w_2)$.
Then $v_2$, $v_3$, and $w_2$ have outdegree at least $4$, which contradicts Lemma~\ref{lem:orientation-8-vertices}.
Thus there is an arc between $w_1$ and $w_2$.

 Now we suppose, to the contrary, that there is no arc between $w_2$ and $w_3$.
	Then $v_1$, $w_2$, $w_3$ cannot form a directed cycle. Since they are out-neighbors of $v_3$, $|N^+(v_3)|=4$ by \eqref{eq:prop:complete-condition-outdegree-3-0} and Proposition~\ref{prop:char-competi-digraph}(3) and so $N^+(v_3)=\{v_1,w_2,w_3,x\}$.
Since there is no arc between $w_2$ and $w_3$, there is an arc $(w_2,w_1)$ in $D$ by \eqref{eq:prop:complete-condition-outdegree-3-2}.
For the same reason, $x$ is the only possible common out-neighbor of $w_3$ and $v_2$, so $N^+(w_3) \cap N^+(v_2)=\{x\}$.
Thus $v_2$ has outdegree $4$ by~\eqref{eq:prop:complete-condition-outdegree-3-0}.
	Since $v_3$ also has outdegree  $4$,
	$D$ is an orientation of $K_{2,2,1,1,1,1}$ by Lemma~\ref{lem:orientation-8-vertices}.
Thus $\{w_2,w_3\}$ is a partite set of $D$.
Since $N^+(v_1)=\{v_2,w_1,w_3\}$ and $ N^+(w_3) \cap N^+(v_2)=\{x\}$,
$(x,w_1)$ must be an arc of $D$ in order for
$v_1$ and $x$ to compete.
Since $u$ is the only possible common out-neighbor of $x$ and $w_1$, there exist arcs $(x,u)$ and $(w_1,u)$ in $D$.
Then $\{x,u,v_1,v_2,v_3,w_1\}$ forms a tournament and we reach a contradiction to the fact that $D$ is an orientation of $K_{2,2,1,1,1,1}$ with $\{w_2,w_3\}$ as a partite set of $D$.
Therefore $\{v_1,v_2,v_3,w_1,w_2,w_3\}$ forms a tournament as we claimed.
Thus $k =6$ and each of $u$ and $x$ belongs to a partite set of size at least $2$. Furthermore, since $N^+(u)=\{v_1,v_2,v_3\}$, $u$ cannot form a partite set with $v_1$, $v_2$, or $v_3$ and so $u$ and exactly one of $w_1$, $w_2$, and $w_3$ belong to the same partite set.

Suppose, to the contrary, that $(w_3,w_2)\in A(D)$.
Then $(w_2,w_1) \in A(D)$ by~\eqref{eq:prop:complete-condition-outdegree-3-2}.
Therefore $w_1\to w_3 \to w_2 \to w_1$ forms a directed cycle.
Then, for each pair of $w_1$, $w_2$, and $w_3$, $x$ and $u$ are its only possible common out-neighbors.
Since $u$ and one of $w_1$, $w_2$, and $w_3$ belong to the same partite set, exactly one pair of $w_1$, $w_2$, and $w_3$ can prey on $u$.
Then the other two pair of  $w_1$, $w_2$, and $w_3$ prey on $x$.
Therefore $\{w_1,w_2,w_3 \} \subseteq N^-(x)$.
Thus $x$ and exactly one of $v_2$ and $v_3$ belong to the same partite set (recall that we assumed $(x,v_1) \in A(D)$).
Let $v_{j+1}$ be the vertex $j\in \{1,2\}$ belonging to the same partite set with $x$.
Then, since $\{u,v_{j-1}\} \subseteq N^-(v_j)$ (identify $v_0$ with $v_3$) and $\{w_1,w_2,w_3 \} \subseteq N^-(x)$,
$x$ and $v_j$ have no common out-neighbor in $D$, which is a contradiction.
Therefore $(w_3,w_2)\notin A(D)$ and so
 \[(w_2,w_3)\in A(D).\]
 Thus $N^+(w_3) \subseteq \{u,v_2,x\}$ and so, by Proposition~\ref{prop:char-competi-digraph}(3), $N^+(w_3)=\{u,v_2,x\}$.
Then $x$ is the only possible common out-neighbor of each pair of $v_3$ and $w_3$, and $v_2$ and $w_3$.
 Therefore $N^+(v_3)=\{v_1,w_2,w_3,x\}$ and $N^+(v_2)=\{v_3,w_1,w_2,x\}$
 by~\eqref{eq:prop:complete-condition-outdegree-3-0}.
Thus $D$ is an orientation of $K_{2,2,1,1,1,1}$ by Lemma~\ref{lem:orientation-8-vertices}.
Moreover, since $N^+(v_1)=\{v_2,w_3,w_1\}$,
$w_1$ is the only possible common out-neighbor of $x$ and $v_1$ and so
$(x,w_1) \in A(D)$.
Then $u$ must be a common out-neighbor of $w_1$ and $w_3$.
Therefore $\{w_2,u\}$ is a partite sets of size $2$ in $D$. Then, since $\{(w_3,x),(v_2,x),(v_2,x),(x,v_1),(x,w_1)\} \subset A(D)$, $\{x\}$ should be a partite set of $D$ and so $k \geq 7$, which is a contradiction.
Therefore we have shown that $|V(D)| \neq 8$ and so $|V(D)| \geq 9$.

To show the ``particular" part, suppose $k=4$.
Let $V_1,V_2,V_3,$ and $V_4$ be the partite sets of $D$. By Proposition~\ref{prop:char-competi-digraph}(3),
$u$, $v_1$, $v_2$, and $v_3$ belong to distinct partite sets.
Without loss of generality, we may assume that $u \in V_1$, $v_1 \in V_2$, $ v_2 \in V_3$, and $v_3 \in V_4$.
Let $y_i$ be a common out-neighbor of $v_i$ and $w_i$ in $D$ for each $1\leq i \leq 3$.
If $y_1=y_2=y_3$, then $\{v_1,v_2,v_3\} \subseteq N^-(y_1)$, which implies that $u$ and $y_1$ do not share a common out-neighbor, and we reach a contradiction.
Therefore at least two of $y_1$, $y_2$, and $y_3$ are distinct.
In addition,
we may see from a subdigraph $\tilde{D}$ given in Figure~\ref{fig:subdigraph-degree3},
that $\{u,w_1,w_2,w_3\} \subseteq V_1$.
Therefore $y_i$ cannot be $w_j$ for each $1\leq i,j \leq 3$.
Suppose, to the contrary, that $|V(D)|=9$.
Then exactly two of $y_1$, $y_2$, and $y_3$ are the same.
Without loss of generality, we may assume $y_1=y_2$ and $y_1 \neq y_3$.
Neither $v_1$ nor $v_2$ is a common out-neighbor of $y_1$ and $u$.
Thus $v_3$ must be a common out-neighbor of $y_1$ and $u$.
Yet, $y_1$ is a common out-neighbor of $v_1$, $v_2$, $w_1$, and $w_2$, so $y_1 \in V_4$ and we reach a contradiction.
Thus $|V(D)| \geq 10$.
\end{proof}
\section{Proofs}
\subsection{A proof of Theorem~\ref{thm:complete-6-partite}}\label{sec:6-partite}
In this subsection, we characterize complete  $k$-partite graphs which are competitively orientable for an integer $k \geq 6$.

Since $K_{1,1,1,1,1,1,1} \cong K_7$ and a competitive  orientation of $K_7$ is given in Figure~\ref{fig:7-tournament-complete}, the following proposition is immediately true by Corollary~\ref{lem:making-multi-complete}.
\begin{Prop} \label{prop:k>=7-complete}
For each integer $k \ge 7$, any $k$-partite complete graph is competitively orientable.
\end{Prop}

We now have completely characterized sizes of the partite sets of a competitive $k$-partite tournament for $k=2$ or $k \geq 7$
by Corollary~\ref{cor:no-biparte-complete} and Proposition~\ref{prop:k>=7-complete}.
Accordingly, it remains to study competitive $k$-partite tournaments for $ 3 \leq k \leq 6$.
Especially, in the rest of this section, we characterize competitively orientable complete $6$-partite graphs.

\begin{Prop} \label{prop:indegree-condition-complete}
Let $D$ be a competitive $k$-partite tournament  for some positive integer $k\geq 3$ with the partite sets $V_1, \ldots, V_k$.
Then
there exists a competitive $k$-partite tournament $D^*$ with the partite sets $V_1, \ldots, V_k$
such that each vertex in $D^*$ has indegree at least $2$.
\end{Prop}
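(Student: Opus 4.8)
The idea is to repeatedly delete vertices of indegree at most $1$ and then re-attach them as "clones" of a well-chosen surviving vertex, so that the number of vertices (and partite-set structure) is ultimately restored but every vertex ends up with indegree at least $2$. Concretely, suppose $D$ has a vertex of indegree at most $1$; among all such vertices pick one, say $v$. By Lemma~\ref{lem:deleting-vertex-complete}, $C(D-v)$ is complete. Now $D-v$ is a $k'$-partite tournament for some $k' \le k$ (a partite set may have disappeared), but in any case it is a multipartite tournament whose competition graph is complete. Iterating, after finitely many deletions we reach a multipartite tournament $\tilde D$, on a vertex set $\tilde V \subseteq V(D)$, such that $C(\tilde D)$ is complete and every vertex of $\tilde D$ has indegree at least $2$. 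Note $\tilde D$ is nontrivial: by Corollary~\ref{cor:outdegree-at-least-complete} every vertex of $\tilde D$ has outdegree at least $3$, so $|\tilde V| \ge 4$, and in particular $\tilde D$ still meets all of the partite sets it needs.

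Next I would rebuild $D^*$ from $\tilde D$ by adding back the deleted vertices, one at a time, in the reverse order of deletion, but each time inserting the new vertex $v$ (into its original partite set $V_i$) as an out-clone of some vertex $u$ already present in $V_i$. That is, I choose arcs so that $N^+_{D'}(v) \supseteq N^+_{D''}(u)$ where $D''$ is the current digraph and $D'$ the one after adding $v$, and so that $A(D'') \subseteq A(D')$; the remaining arcs incident to $v$ (those joining $v$ to vertices outside $V_i$ not already forced) may be oriented arbitrarily, subject to no $2$-cycles. Two things must be checked at each step: first, that such a $u \in V_i$ exists in the current digraph — this is where I must be slightly careful, since the last vertex of $V_i$ could conceivably have been among those deleted; but in fact if $v$ was deleted from $V_i$ then either some vertex of $V_i$ survives in $\tilde D$, or the vertices of $V_i$ are re-added in some order and the first one re-added can be attached as a clone of a vertex in a different partite set (using Lemma~\ref{lem:adding-vertex}, which only requires $u$ to lie in $D$, not in $V_i$), with later ones cloned from it. Second, by Lemma~\ref{lem:adding-vertex}, since $|V(D'')| \ge 2$ and $C(D'')$ is complete, $C(D')$ is complete. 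Hence by induction $C(D^*)$ is complete, and $D^*$ has partite sets $V_1,\dots,V_k$ because we restored every deleted vertex to its original set.

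Finally I must verify that every vertex of $D^*$ has indegree at least $2$. For a vertex $x \in \tilde V$ this is immediate since its in-neighbourhood in $\tilde D$ is preserved and can only grow. For a re-added vertex $v$, cloned from $u$: $v$ acquires as out-neighbours all of $N^+(u)$, and crucially $u \notin N^+(v)$ is not forced — rather, since $u \ne v$ lie in the same partite set $V_i$ (in the generic case), there is no arc between them at all, so this does not directly give $v$ an in-neighbour. Instead I argue via the completeness of $C(D^*)$: in $D^*$, $v$ has outdegree at least $3$ (Corollary~\ref{cor:outdegree-at-least-complete}), and its out-neighbours lie in at least two partite sets (Lemma~\ref{lem:two-partite-set-complete}); more to the point, $v$ must be adjacent in $C(D^*)$ to every other vertex, and a short counting/neighbourhood argument — of exactly the flavour already used in Proposition~\ref{prop:two-partite-out-neighbor-2-2} — forces $|N^-_{D^*}(v)| \ge 2$. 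Alternatively, and more cleanly, I can build this into the construction: when adding $v$ as a clone of $u$, also require $v$ to receive at least two in-arcs, choosing them among vertices of $V(D'')$ outside $V_i$ that currently have an arc slot to $v$; since $|V(D'')| \ge 3$ and $v$ is adjacent (in the complete multipartite underlying graph) to all but the at most $|V_i|-1$ vertices of $V_i$, and $D''$ already has at least $3$ vertices with large outdegree, there is room to do so without creating $2$-cycles or disturbing the clone arcs. The main obstacle is precisely this last bookkeeping: ensuring that the freedom to orient the "extra" edges at $v$ is genuinely there and compatible with the clone condition and the no-$2$-cycle constraint simultaneously; everything else is a routine application of Lemmas~\ref{lem:deleting-vertex-complete} and~\ref{lem:adding-vertex}.
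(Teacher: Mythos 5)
Your overall strategy (delete vertices of indegree at most $1$ using Lemma~\ref{lem:deleting-vertex-complete}, then re-attach them in reverse order as clones via Lemma~\ref{lem:adding-vertex}) is the same as the paper's, but the step you yourself flag as ``the main obstacle'' --- guaranteeing that each re-added vertex ends up with indegree at least $2$ --- is a genuine gap, and your first proposed remedy is false. Completeness of the competition graph does \emph{not} force every vertex to have indegree at least $2$: starting from the $7$-tournament of Figure~\ref{fig:7-tournament-complete} and adding a new vertex $v$ with arcs from $v$ to all seven old vertices, Lemma~\ref{lem:adding-vertex} shows the competition graph stays complete while $v$ has indegree $0$. (Indeed, if completeness implied indegree at least $2$, the proposition would be vacuous with $D^*=D$.) So no ``counting/neighbourhood argument of the flavour of Proposition~\ref{prop:two-partite-out-neighbor-2-2}'' can close this; the indegree condition must be built into the construction, and your alternative sketch of doing so is left as unverified bookkeeping rather than an argument.

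The paper's resolution is simple and is what your construction is missing: when re-adding $v$, do not merely out-clone the chosen vertex $v'$, but copy \emph{both} its out-neighbourhood and its in-neighbourhood, i.e.\ additionally put an arc $(w,v)$ for every $w\in N^-(v')$ in the current digraph (orienting the remaining edges, e.g.\ those between $v$ and the rest of $v'$'s partite set in the case where $v$ forms a singleton partite set, arbitrarily). This is consistent: the in-neighbours of $v'$ lie outside $v$'s partite set and are disjoint from $N^+(v')$, so no $2$-cycles arise and the clone condition $N^+(v')\subseteq N^+(v)$ needed for Lemma~\ref{lem:adding-vertex} is undisturbed. Since one maintains inductively that every vertex of the current digraph has indegree at least $2$ (true for $\tilde D$ by construction, and preserved because old vertices only gain in-neighbours while the new vertex inherits $|N^-(v')|\geq 2$ in-neighbours), the re-added vertex automatically has indegree at least $2$. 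With this modification --- and choosing $v'$ in the surviving part of $v$'s original partite set when it is nonempty, so that the partite sets of $D^*$ are exactly $V_1,\ldots,V_k$ --- your argument becomes the paper's proof.
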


\begin{proof}
Suppose that there exists a vertex $v_1$ of indegree at most $1$ in $V_{t_1}$ for some $t_1 \in \{1,\ldots,k\}$.
Let $D_1=D-v_1$.
Then $D_1$ is competitive by Proposition~\ref{prop:char-competi-digraph}(1).
By Corollary~\ref{cor:no-biparte-complete},
$D_1$ is not a bipartite tournament.
Suppose that there exists a vertex $v_{2}$ of indegree at most $1$ in $V_{t_2}$ for some $t_2 \in \{1,\ldots,k\}$ in $D_1$.
Let $D_2=D_1-v_{2}$.
Therefore $D_2$ is competitive by Proposition~\ref{prop:char-competi-digraph}(1) and so, by Corollary~\ref{cor:no-biparte-complete}, $D_2$ is not a bipartite tournament.
We keep repeating this process.
Since $D$ has a finite number of vertices, this process terminates to produce digraphs $D_1, D_2, \ldots, D_l$ each of which is competitive and none of which is a bipartite tournament.
Since $D_l$ is competitive, the number of partite sets in $D_l$ is at least $3$.
The fact that the process ended with $D_l$ implies that each vertex in $D_l$ has indegree at least $2$.
As some of partite sets of $D_l$ are proper subsets of corresponding partite sets of $D$,
we need to add vertices to obtain a desired $k$-partite tournament.
Let $X$ be the partite set of $D_{l-1}$ to which $v_l$ belongs.
Then $X \subseteq V_{t_l}$.
In the following, we construct a multipartite tournament $D^*_{l-1}$ from $D_l$ such that
$V(D_{l-1})=V(D^*_{l-1})$, $D_{l-1}$ and $D^*_{l-1}$ have the identical partite sets, and $D^*_{l-1}$ is competitive.
We consider two cases for $X$.

{\it Case 1}. $X=\{v_l\}$.
We take a vertex $v'$ in $D_l$.
Then $v'$ has indegree at least $2$.
Now we add $v_l$ to $D_l$ so that $\{v_l\}$ is a partite set of $D^*_{l-1}$, $v_l$ takes the out-neighbors and the in-neighbors
of $v'$ as its out-neighbors and in-neighbors, respectively, and the remaining out-neighbors and in-neighbors of $v_l$ are arbitrarily taken.
Then the indegree of $v_l$ in $D^*_{l-1}$ is at least $2$.
Moreover,
\[V(D_{l})\cup \{v_l\}=V(D^*_{l-1}), \quad A(D_l) \subset A(D^*_{l-1}), \quad  \text{and} \quad  N^+_{D_l}(v') \subset N^+_{D^*_{l-1}}(v_{l}). \]

{\it Case 2}. $\{v_l\}  \varsubsetneq X$.
Then there exists a vertex $v'$ distinct from $v_{l}$ in $X$.
Since $D_l=D_{l-1}-v_{l}$, $v'$ is a vertex of $D_l$.
Now we add $v_l$ to the partite set of $D_l$ where $v'$ belongs so that
$\{v_{l},v'\}$ is involved in a partite set of $D^*_{l-1}$, $v_l$ takes the out-neighbors and the in-neighbors
of $v'$ as its out-neighbors and in-neighbors, respectively.
Then the indegree of $v_l$ in $D^*_{l-1}$ is at least $2$ since the indegree of $v'$ is at least $2$ in $D_l$.
Moreover,
\[V(D_{l})\cup \{v_l\}=V(D^*_{l-1}), \quad A(D_l) \subset A(D^*_{l-1}), \quad \text{and} \quad  N^+_{D_l}(v') \subseteq N^+_{D^*_{l-1}}(v_{l}). \]
In both cases, $D^*_{l-1}$ is competitive by Proposition~\ref{prop:adding-vertex}.

Now we add $v_{l-1}$ to $D^*_{l-1}$
and apply an argument similar to the above one to obtain competitive multipartite tournament $D^*_{l-2}$ each vertex in which has indegree at least $2$.
We may repeat this process until we obtain a competitive $k$-partite tournament
$D^*_0$ each vertex of which has indegree at least $2$.
Since we added $v_i$ to the partite set of $D^*_i$ which is included in $V_{t_i}$ for each $1\leq i \leq l$,
it is true that the partite sets of $D^*_0$ are the same as $D$.
Thus $D^*_0$ is a desired $k$-partite tournament.
\end{proof}

\begin{Prop} \label{prop:K_411111-no}
The complete $6$-partite graph $K_{4,1,1,1,1,1}$ is not competitively orientable.
\end{Prop}

\begin{proof}
Suppose, to the contrary, that there exists a competitive orientation of $K_{4,1,1,1,1,1}$.
Then, by Proposition~\ref{prop:indegree-condition-complete},  there exists a competitive orientation $D$ of $K_{4,1,1,1,1,1}$ each vertex of which has indegree at least $2$.
Let $V_1,\ldots,V_6$ be the partite sets of $D$ with $|V_1|=4$.
By Proposition~\ref{prop:char-competi-digraph}(3),
each vertex has outdegree at least $3$ in $D$.
Then, since each vertex has indegree at least $2$ in $D$,
\begin{equation} \label{equation:prop:K_411111-no}
|N^+(v)|=3 \quad \text{and} \quad |N^-(v)|=2
\end{equation}
for each vertex $v$ in $V_1$.
By Proposition~\ref{prop:char-competi-digraph}(4), there exist at least $\max \{4|V(D)|-|A(D)|,0\}$ vertices of outdegree $3$ in $D$.
Since $4|V(D)|-|A(D)|=6$,
there exist at least $6$ vertices of outdegree $3$.
Thus at least two vertices of outdegree $3$ do not belong to $V_1$.
Let $u$ be a vertex of outdegree $3$ which is not in $V_1$.
Without loss of generality, we may assume $u \in V_2$.

Let $N^+(u)=\{v_1,v_2,v_3\}$.
By Proposition~\ref{prop:char-competi-digraph}(3),
$N^+(u)$ forms a directed cycle in $D$ and
we may assume $v_1 \to v_2 \to v_3 \to v_1$.
Since each out-neighbor of $u$ has indegree at least $3$ by Theorem~\ref{thm:complete-condition-outdegree-3},
$N^+(u) \cap V_1 = \emptyset$ by~\eqref{equation:prop:K_411111-no}.
Therefore we may assume that $V_3=\{v_1\}$, $V_4=\{v_2\}$, $V_5=\{v_3\}$, $V_6=\{x\}$, and $v_1$ is a common out-neighbor of $x$ and $u$.
Then $\{u,v_3,x\} \subseteq N^-(v_1)$.
Let $w_1$ be a common out-neighbor of $v_1$ and $v_2$.
Then $w_1 \in V_1$.
Therefore, by~\eqref{equation:prop:K_411111-no}, $N^-(w_1)=\{v_1,v_2\}$ and $N^+(w_1)=\{u,v_3,x\}$.
Thus  $N^+(w_1)\subseteq N^-(v_1)$
and so $w_1$ and $v_1$ have no common out-neighbor, which is a contradiction.
\end{proof}

Let $D$ be a digraph with the vertex set $\{v_1,v_2,\ldots,v_n\}$ and $A=(a_{ij})$ be the {\it adjacency matrix }of $D$ such that \begin{displaymath}
a_{ij} = \left\{ \begin{array}{ll}
1 & \textrm{if there is an arc $(v_i,v_j)$ in $D$,}\\
0 & \textrm{otherwise.}\
\end{array} \right.
\end{displaymath}

Now we are ready to prove Theorem~\ref{thm:complete-6-partite}.

\begin{proof}[Proof of Theorem~\ref{thm:complete-6-partite}]
To show the ``only if" part, suppose that there exists a competitive orientation $D$ of $K_{n_1,n_2,\ldots,n_6}$.
We suppose $n_3=1$.

{\it Case 1}. $n_2=1$.
If $n_1 \leq 4$, then
 there exists a competitive orientation of $K_{4,1,1,1,1,1}$ by Corollary~\ref{lem:making-multi-complete}, which contradicts Proposition~\ref{prop:K_411111-no}.
Therefore $n_1 \geq 5$.

{\it Case 2}. $n_2 \geq 2$.
Then $n_1 \geq 2$.
Suppose, to the contrary, that $n_1 = 2$.
Then $n_2=2$, so $D$ is an orientation of $K_{2,2,1,1,1,1}$.
Therefore $4|V(D)|-|A(D)|=6$.
By Proposition~\ref{prop:char-competi-digraph}(4),
there exists a vertex of outdegree $3$ in $D$.
Therefore $|V(D)| \geq 9$ by Theorem~\ref{thm:complete-condition-outdegree-3}, which is a contradiction.
Thus $n_1 \geq 3$.
Hence the ``only if" part is true.

Now we show the ``if" part.
Let $D_\alpha$, $D_\beta$, and $D_\gamma$ be the digraphs whose adjacency matrix are $A_1$, $A_2$, and $A_3$, respectively, given in Figure~\ref{fig:6-partite-complete}.
It is easy to check that
 the inner product of each pair of rows in each matrix is nonzero, so  $D_\alpha$, $D_\beta$, and $D_\gamma$ are competitive.
By applying Corollary~\ref{lem:making-multi-complete} to $D_\alpha$, $D_\beta$, and $D_\gamma$,
we may obtain competitive orientations $D'_\alpha$, $D'_\beta$, and $D'_\gamma$ of $K_{n_1,n_2,n_3,n_4,n_5,n_6}$ for (a) $n_1\geq5$ and $n_2=1$; (b) $n_1 \geq 3$, $n_2 \geq 2$; (c) $n_3\geq 2$, respectively.
Therefore we have shown that the ``if" part is true.
\end{proof}

\begin{figure}
  \centering
    \begin{equation*}
A_1=
\left(\begin{array}{*{10}c}
 0 & 0 & 0 & 0 & 0 &
  0 & 1 & 0 & 1 & 1  \\
  0 & 0 & 0 & 0 & 0 &
  0 & 1 & 1 & 0 & 1  \\
  0 & 0 & 0 & 0 & 0 &
  1 & 0 & 1 & 1 & 0  \\
  0 & 0 & 0 & 0 & 0 &
  1 & 0 & 1 & 0 & 1 \\
  0 & 0 & 0 & 0 & 0 &
  1 & 1 & 0 & 1 & 0 \\
 1 & 1 & 0 & 0 & 0 &
  0 & 1 & 1 & 0 & 0  \\
 0 & 0 & 1 & 1 & 0 &
  0 & 0 & 1 & 1 & 0  \\
 1 & 0 & 0 & 0 & 1 &
  0 & 0 & 0 & 1 & 1  \\
 0 & 1 & 0 & 1 & 0 &
  1& 0 & 0 & 0 & 1  \\
 0 & 0 & 1 & 0 & 1 &
  1 & 1 & 0 & 0 & 0  \\
   \end{array}
    \right)
    \end{equation*}
    \begin{equation*}
A_2=
\left(\begin{array}{*{9}c}
 0 & 0 & 0 &
 1 & 0 &
 1 & 0 & 1 & 0   \\
 0 & 0 & 0 &
 0 & 1 &
 1 & 1 & 0 & 1   \\
 0 & 0 & 0 &
 0 & 1 &
 1 & 0 & 1 & 1   \\
 0 & 1 & 1 &
 0 & 0 &
 0 & 0 & 1 & 1   \\
 1 & 0 & 0 &
 0 & 0 &
 1 & 0 & 0 & 1   \\
 0 & 0 & 0 &
 1 & 0 &
 0 & 1 & 0 & 1   \\
 1 & 0 & 1 &
 1 & 1 &
 0 & 0 & 0 & 0   \\
 0 & 1 & 0 &
 0 & 1 &
 1 & 1 & 0 & 0   \\
 1 & 0 & 0 &
 0 & 0 &
 0 & 1 & 1 & 0   \\
   \end{array}
    \right)
    \end{equation*}
    \begin{equation*}
    A_3=
\left(\begin{array}{*{9}c}
 0 & 0 & 0 & 0 & 0 & 0
 & 1 & 1 & 1   \\
 0 & 0 & 1 & 0 & 1 & 1
 & 0 & 1 & 0   \\
 1 & 0 & 0 & 0 & 1 & 0
 & 1 & 1 & 1   \\
 1 & 1 & 0 & 0 & 1 & 0
 & 0 & 0 & 1   \\
 1 & 0 & 0 & 0 & 0 & 0
 & 1 & 1 & 1   \\
 1 & 0 & 1 & 1 & 0 & 0
 & 1 & 0 & 0   \\
 0 & 1 & 0 & 1 & 0 & 0
 & 0 & 1 & 0   \\
 0 & 0 & 0 & 1 & 0 & 1
 & 0 & 0 & 1   \\
 0 & 1 & 0 & 0 & 0 & 1
 & 1 & 0 & 0   \\
   \end{array}
    \right)
    \end{equation*}
      \caption{The adjacency matrices $A_1$, $A_2$, and $A_3$ which are orientations of $K_{5,1,1,1,1,1}$, $K_{3,2,1,1,1,1}$, and $K_{2,2,2,1,1,1}$, respectively, in the proof of Theorem~\ref{thm:complete-6-partite}}
      \label{fig:6-partite-complete}
    \end{figure}

\subsection{A proof of Theorem~\ref{thm:complete-3-partite}}\label{sec:3-partite}

In this subsection, we characterize complete tripartite graphs which are competitively orientable.

    \begin{Prop}\label{thm4} Let $D$ be a competitive orientation of $K_{n_1, n_2, n_3}$ for some positive integers $n_1, n_2$, and $n_3$. Then $n_i \ge 4$ for each $1\leq i \leq 3$. \end{Prop}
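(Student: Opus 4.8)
The plan is to show that no partite set of a tripartite tournament $D$ with complete competition graph can have size $1$, $2$, or $3$, arguing each case by a counting/structure contradiction. The key quantitative levers are Corollary~\ref{cor:outdegree-at-least-complete} (every vertex has outdegree $\geq 3$), Corollary~\ref{cor:n.b.of-vertices-outdegree3} (there are at least $\max\{4|V(D)|-|A(D)|,0\}$ vertices of outdegree $3$), and Proposition~\ref{prop:complete-condition-outdegree-3} (a vertex of outdegree $3$ forces $|V(D)|\geq 9$, indeed $\geq 10$ when $k=4$; its proof also yields that a vertex $u$ of outdegree $3$ together with its out-neighbours and the three associated $w_i$'s occupies seven vertices in a specific subdigraph $\tilde D$, and $u$'s three out-neighbours lie in three distinct partite sets). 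I would first fix notation: let $V_1,V_2,V_3$ be the partite sets with $|V_i|=n_i$ and assume for contradiction $n_3 = \min_i n_i \leq 3$ (by symmetry of the labels it suffices to bound the smallest part).

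The main case is $n_3 = 1$, say $V_3=\{z\}$. The idea is to show $z$ has small outdegree and derive a contradiction from the structure forced by Proposition~\ref{prop:complete-condition-outdegree-3}, or more directly: every out-neighbour of $z$ lies in $V_1\cup V_2$, and by Lemma~\ref{lem:two-partite-set-complete} they meet both $V_1$ and $V_2$; so by Proposition~\ref{prop:two-partite-out-neighbor-2-2} we need $|N^+(z)\cap V_1|\geq 2$ and $|N^+(z)\cap V_2|\geq 2$. More importantly, since $z$ is the only vertex of $V_3$, every vertex $x\neq z$ has $z$ as its only possible neighbour-target outside its own part plus... — here the cleanest route is to observe that for any two vertices $a,b$ in the same part $V_i$ ($i\in\{1,2\}$), a common out-neighbour of $a$ and $b$ must lie in $V_{3-i}\cup V_3$; tracking who can prey on $z$ and who must prey into $V_1$ or $V_2$ should overload one part. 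I expect the slick version to run: count vertices of outdegree $3$ via Corollary~\ref{cor:n.b.of-vertices-outdegree3}, locate one outside a chosen part, invoke the $|V(D)|\geq 9$ bound and the explicit $\tilde D$, and show the three partite sets cannot absorb $\tilde D$ plus the remaining vertices when $n_3=1$.

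For $n_3=2$ and $n_3=3$ I would argue similarly but now needing a vertex of outdegree $3$ to exist at all: estimate $|A(D)|$ from above in terms of $n_1+n_2+n_3=:n$ (the number of arcs of $K_{n_1,n_2,n_3}$ is $\binom{n}{2}-\sum\binom{n_i}{2}$, maximized given $n_3$ fixed by making the parts as equal as the constraint allows), compare with $4n$ via Corollary~\ref{cor:n.b.of-vertices-outdegree3} to force at least one outdegree-$3$ vertex unless $n$ is large, then use Proposition~\ref{prop:complete-condition-outdegree-3} to get $|V(D)|\geq 9$ and push the structural contradiction; in the residual large-$n$ subcases one exhibits an outdegree-$3$ vertex directly by a degree-sum argument within the small parts. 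The main obstacle I anticipate is the bookkeeping in the $n_3=1$ case: showing that the seven-vertex subdigraph $\tilde D$ of Proposition~\ref{prop:complete-condition-outdegree-3} — with $u$'s out-neighbours in three distinct parts but only three parts available — cannot be completed to a full orientation of $K_{n_1,n_2,1}$ with complete competition graph, since every "fresh" common out-neighbour we are forced to introduce must avoid $z$'s in-neighbourhood yet also cannot go into the already-saturated parts; making this airtight (rather than hand-wavy about "which part each $w_i$ lies in") is the delicate part, and it may be cleaner to instead prove the contrapositive-flavoured statement that in any such $D$ with $n_3=1$ the vertex $z$ forces some vertex to have all out-neighbours in a single part, contradicting Lemma~\ref{lem:two-partite-set-complete}.
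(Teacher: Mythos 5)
Your plan has a structural flaw: it is built around producing a vertex of outdegree $3$ and then invoking Proposition~\ref{prop:complete-condition-outdegree-3}, but that proposition is stated (and proved) only for $k\in\{4,5,6\}$, so it cannot be applied to a tripartite tournament; and even morally a conclusion of the form ``$|V(D)|\geq 9$'' cannot finish this proposition, because $n_1$ and $n_2$ are unbounded (think of $K_{m,m,3}$ with $m$ large), so forcing $|V(D)|$ to be large is never a contradiction here. Worse, the outdegree-$3$ vertex you want need not exist: Corollary~\ref{cor:n.b.of-vertices-outdegree3} only yields one when $|A(D)|<4|V(D)|$, which fails for large $n_1,n_2$ even with $n_3\leq 3$, and no ``degree-sum argument within the small parts'' can manufacture one, since vertices of the small part can have arbitrarily many out-neighbours in the two large parts. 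In fact, by Lemma~\ref{lem:complete-outdegree-3} an outdegree-$3$ vertex in a tripartite tournament with complete competition graph is outright impossible (its three out-neighbours would form a directed triangle and hence occupy three distinct partite sets, one of which is the vertex's own), so the entire $\tilde D$-absorption route has nothing to absorb. Your first, non-counting observations for $n_3=1$ (that $|N^+(z)\cap V_1|\geq 2$ and $|N^+(z)\cap V_2|\geq 2$, and that same-part vertices compete in the other parts) are correct but you never extract a contradiction from them, and the $n_3=3$ case is left entirely to the broken counting scheme.

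The missing idea is much more local. In a tripartite tournament every vertex's out-neighbours lie in \emph{exactly} two partite sets (Lemma~\ref{lem:two-partite-set-complete} plus the fact that a vertex is non-adjacent to its own part), so Proposition~\ref{prop:two-partite-out-neighbor-2-2} applies to \emph{every} vertex: each vertex has at least two out-neighbours in each of the other two parts. Now take $v_1$ in a part $V_1$ with $|V_1|\leq 3$ and any $u_1\in N^+(v_1)\cap V_2$; then $u_1$ has two out-neighbours $v_2,v_3$ in $V_1\setminus\{v_1\}$, forcing $|V_1|=3$ and $V_1=\{v_1,v_2,v_3\}$, and the same proposition forces every out-neighbour $v$ of $v_1$ to satisfy $N^+(v)\cap V_1=\{v_2,v_3\}$. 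Hence every common out-neighbour candidate for $v_1$ and $v_2$ would be a vertex $w\in N^+(v_1)$ with $v_2\to w$, contradicting $w\to v_2$; so $v_1$ and $v_2$ cannot be adjacent in $C(D)$. This two-step degree argument (which is the paper's proof) handles $n_i\in\{1,2,3\}$ uniformly and needs no arc counting, no outdegree-$3$ vertices, and no appeal to results proved only for $k\geq 4$.
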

        \begin{proof}
        Let $V_1, V_2,$ and $V_3$ be the partite sets of $D$ with $|V_i|=n_i$ for each $1\leq i \leq 3$.
        Suppose, to the contrary, that $n_j\leq 3$ for some $j \in \{1,2,3\}$.
        Without loss of generality, we may assume that $n_1 \leq 3$.
        Take $v_1 \in V_1$.
        By Proposition~\ref{prop:char-competi-digraph}(2),
        the out-neighbors of each vertex in $D$ are included in at least two partite sets.
        Then, since $D$ is tripartite tournament, the out-neighbors of each vertex in $D$ are included in exactly two partite sets.
        Thus, by Proposition~\ref{prop:two-partite-out-neighbor-2-2}, there are four vertices $u_1, u_2, w_1,$ and $w_2$ such that $\{u_1, u_2\} \subseteq N^+(v_1) \cap V_2$ and $\{w_1, w_2\} \subseteq N^+(v_1) \cap V_3$.
        By the same proposition, there are two vertices $v_2$ and $v_3$ in $V_1$ such that $\{v_2, v_3\} \subseteq N^+(u_1) \cap V_1$.
        Since $(v_1, u_1) \in A(D)$, $v_2$ and $v_3$ are distinct from $v_1$.
        Since $n_1 \leq 3$, $n_1=3$. Then
         $V_1 = \{v_1, v_2, v_3\}$.
        Therefore, by Proposition~\ref{prop:two-partite-out-neighbor-2-2}, $N^+(v) \cap V_1 =\{v_2,v_3\}$ for each vertex $v$ in $N^+(v_1)$.
        This implies that each out-neighbor of $v_1$ has $v_2$ as its out-neighbor.
        Therefore $v_1$ and $v_2$ cannot compete, which is a contradiction.
        \end{proof}

    \begin{Lem}\label{lem6} If $D$ is a competitive orientation of $K_{4,4,4}$ with the partite sets $V_1, V_2$, and $V_3$, then, for distinct $i,j \in \{1,2,3\}$ and each $u \in V_i$, $|N^+(u) \cap V_j|=|N^-(u) \cap V_j|=2$. \end{Lem}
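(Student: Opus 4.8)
The plan is to count arcs between two partite sets in two ways and show that perfect balance ($2$ out, $2$ in) is forced. Fix distinct $i,j\in\{1,2,3\}$ and let $k$ be the third index. Since $D$ is a tripartite tournament, Lemma~\ref{lem:two-partite-set-complete} together with the fact that there are only three partite sets tells us that the out-neighbors of every vertex meet exactly two partite sets, and then Proposition~\ref{prop:two-partite-out-neighbor-2-2} gives $|N^+(v)\cap P|\ge 2$ for each of those two sets $P$. The key numerical consequence: for each vertex $u\in V_i$, since $|N^+(u)\cap V_j|+|N^+(u)\cap V_k|=|N^+(u)|$ and $|V_j|=|V_k|=4$, we have $|N^+(u)\cap V_j|\le 4$ always, but more importantly I want to pin down which of the two ``other'' partite sets $u$ sends arcs into.

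\medskip
\noindent\textbf{Main argument.} First I would establish that \emph{every} vertex $u\in V_i$ has out-neighbors meeting \emph{both} $V_j$ and $V_k$ (not just some two sets). Indeed the two partite sets met by $N^+(u)$ must be two of $\{V_i,V_j,V_k\}\setminus\{V_i\}=\{V_j,V_k\}$ — but there are only two such sets, so $N^+(u)$ meets exactly $V_j$ and $V_k$, and by Proposition~\ref{prop:two-partite-out-neighbor-2-2}, $|N^+(u)\cap V_j|\ge 2$ and $|N^+(u)\cap V_k|\ge 2$. By symmetry (reversing roles, since the hypotheses are symmetric in the three partite sets and arc-reversal is not needed — the \emph{same} statement applied with $i$ and $j$ interchanged), every $w\in V_j$ satisfies $|N^+(w)\cap V_i|\ge 2$ and $|N^+(w)\cap V_k|\ge 2$. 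Now count the arcs between $V_i$ and $V_j$. Every one of the $16$ pairs $(u,w)\in V_i\times V_j$ spans exactly one arc (complete bipartite between the two parts), so the number of arcs from $V_i$ to $V_j$ plus the number from $V_j$ to $V_i$ equals $16$. On the other hand $\sum_{u\in V_i}|N^+(u)\cap V_j|$ counts arcs from $V_i$ to $V_j$, and each summand is $\ge 2$, giving at least $8$; similarly $\sum_{w\in V_j}|N^+(w)\cap V_i|\ge 8$ counts arcs from $V_j$ to $V_i$. Since these two totals sum to exactly $16$, both are exactly $8$, which forces every summand to equal $2$: $|N^+(u)\cap V_j|=2$ for all $u\in V_i$ and $|N^+(w)\cap V_i|=2$ for all $w\in V_j$. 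Finally, for $u\in V_i$, since $u$ has exactly $4$ arcs to $V_j$-vertices (as $|V_j|=4$) and exactly $2$ go out, the remaining $2$ come in, so $|N^-(u)\cap V_j|=2$ as well.

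\medskip
\noindent\textbf{Anticipated obstacle.} The only delicate point is justifying the ``by symmetry'' step cleanly: I must make sure that Proposition~\ref{prop:two-partite-out-neighbor-2-2} applies verbatim to vertices of $V_j$ (it does, since the proposition is a statement about arbitrary vertices of an arbitrary multipartite tournament with complete competition graph, and $D$ restricted to any subset is not what we need — we apply it directly to $D$). Everything else is the double-counting equality $8+8=16$ collapsing all inequalities to equalities, which is routine. I should also remark that Proposition~\ref{thm4} guarantees $n_1=n_2=n_3=4$ is a legitimate case to consider, though here $K_{4,4,4}$ is given by hypothesis so no further justification is needed. I would write the proof in essentially the three sentences above, expanding the counting step slightly for clarity.
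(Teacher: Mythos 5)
Your proof is correct and follows essentially the same route as the paper: apply Lemma~\ref{lem:two-partite-set-complete} and Proposition~\ref{prop:two-partite-out-neighbor-2-2} to get $|N^+(u)\cap V_j|\ge 2$ for all vertices on both sides, then double-count the $16$ arcs between $V_i$ and $V_j$ to force equality, and deduce $|N^-(u)\cap V_j|=2$ from $4-2=2$. No gaps; the ``symmetry'' step you worried about is fine since the proposition applies to every vertex of $D$ directly.
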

        \begin{proof}
       Suppose that there exists a competitive orientation $D$ of $K_{4,4,4}$ with partite sets $V_1, V_2$, and $V_3$.
         Take distinct $i$ and $j$ in $\{1,2,3\}$.
        Then there are exactly $16$ arcs between $V_i$ and $V_j$.
        On the other hand, by Proposition~\ref{prop:two-partite-out-neighbor-2-2}, for each $u \in V_i$ and $v \in V_j$,
        \begin{equation*}
        |N^+(u) \cap V_j| \geq2 \quad \text{and} \quad |N^+(v) \cap V_i| \geq2.
        \end{equation*}
        Therefore
        \begin{equation*}
        16=\sum_{u \in V_i} |N^+(u) \cap V_j| + \sum_{v \in V_j} |N^+(v) \cap V_i|\geq 16.
        \end{equation*}
        and so $|N^+(u) \cap V_j|=|N^+(v) \cap V_i|=2$ for each $u \in V_i$ and $v \in V_j$.
        Hence $|N^+(u) \cap V_j|=|N^-(u) \cap V_j|=2$ for each $u \in V_i$.
        \end{proof}

    \begin{Lem}\label{lem7} If $D$ is a competitive orientation of $K_{4,4,4}$ with the partite sets $V_1, V_2$, and $V_3$, then, for some distinct $i$ and $j$ in $\{1,2,3\}$, there is a pair of vertices $x$ and $y$ in $V_i$ such that $N^+(x) \cap V_j=N^+(y) \cap V_j$. \end{Lem}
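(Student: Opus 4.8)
The plan is a proof by contradiction. Suppose that for every ordered pair of distinct indices $i,j \in \{1,2,3\}$ the map $\phi_{ij}\colon V_i \to \binom{V_j}{2}$, $u \mapsto N^+(u)\cap V_j$, is injective; this is exactly the negation of the statement, and I will show that then $C(D)$ cannot be complete. The first and main step is to prove that each such family of out-neighbourhoods is the edge set of a $4$-cycle. By Lemma~\ref{lem6}, $|N^+(u)\cap V_j| = 2$ for all $u\in V_i$ and $|N^-(w)\cap V_i| = 2$ for all $w\in V_j$, so the four sets $\phi_{ij}(u)$ ($u\in V_i$) are four distinct $2$-subsets of the $4$-set $V_j$, and every element $w\in V_j$ occurs in exactly two of them, namely those indexed by the two vertices of $N^-(w)\cap V_i$. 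Regarded as edges on $V_j$, these four $2$-subsets form a $2$-regular simple graph on four vertices, hence a $4$-cycle $C_{ij}$ (the only $2$-regular simple graph on four vertices), and $\{\,N^+(u)\cap V_j : u\in V_i\,\} = E(C_{ij})$.

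Now apply this to the two $4$-cycles $C_{21}$ and $C_{31}$, both living on the $4$-element vertex set $V_1$. Each is a set of four of the $\binom{4}{2}=6$ pairs of $V_1$, so $C_{21}\cap C_{31}\neq\emptyset$; choose an edge $f$ in this intersection, and let $g$ be the edge of the $4$-cycle $C_{31}$ opposite to $f$, so that $f\cap g=\emptyset$. Because $f\in E(C_{21})$ and $g\in E(C_{31})$, there are vertices $x\in V_2$ with $N^+(x)\cap V_1=f$ and $y\in V_3$ with $N^+(y)\cap V_1=g$. Any common out-neighbour $w$ of $x$ and $y$ receives an arc from $x\in V_2$ and from $y\in V_3$, hence lies in neither $V_2$ nor $V_3$; so $w\in V_1$ and therefore $w\in (N^+(x)\cap V_1)\cap(N^+(y)\cap V_1)=f\cap g=\emptyset$, which is absurd. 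Thus $x$ and $y$ have no common out-neighbour and are non-adjacent in $C(D)$, contradicting completeness. Hence some $\phi_{ij}$ fails to be injective, which is the assertion.

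I expect the rigidity step --- deducing that injectivity plus the degree balance of Lemma~\ref{lem6} forces the interaction between any two parts to be (the edge set of) a $4$-cycle --- to be the only real content. After that, the finish is merely a pigeonhole count on the six pairs of $V_1$, the elementary fact that opposite edges of a $4$-cycle are disjoint, and the observation that a common out-neighbour of a vertex of $V_2$ and a vertex of $V_3$ is forced to lie in $V_1$; in particular no case analysis on whether $C_{21}=C_{31}$ is needed, since $C_{21}\cap C_{31}$ is nonempty regardless.
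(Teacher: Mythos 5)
Your proof is correct, but it takes a genuinely different route from the paper's. Both arguments start from the same negation (all maps $u \mapsto N^+(u)\cap V_j$ injective) and both lean on Lemma~\ref{lem6}, but you then extract a global structural fact: injectivity plus the $2$-in/$2$-out balance of Lemma~\ref{lem6} forces the four out-neighbourhoods of $V_i$ into $V_j$ to be the edge set of a $4$-cycle on $V_j$ (four distinct edges, every vertex of $V_j$ covered exactly twice, and $C_4$ is the only $2$-regular simple graph on four vertices). After that, a pigeonhole count ($4+4>\binom{4}{2}$) gives a common edge $f$ of the two cycles $C_{21}$ and $C_{31}$ on $V_1$, the edge $g$ of $C_{31}$ opposite to $f$ is disjoint from $f$, and the corresponding vertices $x\in V_2$, $y\in V_3$ can only compete inside $V_1$, where their out-neighbourhoods $f$ and $g$ are disjoint --- contradiction. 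The paper instead performs a local vertex chase: it fixes $u,v\in V_i$ with a common out-neighbour in $V_j$, uses injectivity and Lemma~\ref{lem6} to pin down the arcs around $v_1,\dots,v_4\in V_j$, passes to the third part $V_k$ via a common out-neighbour of $v_1$ and $v_4$, and finally exhibits the non-adjacent pair $w_2\in V_k$, $v\in V_i$. Your version buys brevity and a reusable structural statement (and, since it only uses the two maps into $V_1$, it actually shows the target part $j$ can be prescribed in advance), at the cost of invoking the small classification of $2$-regular graphs on four vertices; the paper's version is pure bookkeeping with named vertices and needs all three parts. Both are complete and compatible with the way the lemma is used in Theorem~\ref{thm8}.
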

        \begin{proof}
        Suppose that there exists a competitive orientation $D$ of $K_{4,4,4}$ with the partite sets $V_1, V_2$, and $V_3$.
        Suppose, to the contrary, that, for distinct $i,j \in \{1,2,3\}$,

        \begin{equation}\label{lem7:eq1} N^+(u) \cap V_j \neq N^+(v) \cap V_j
        \end{equation}
        for any pair of vertices $u$ and $v$ in $V_i$.
        Fix $i \in \{1,2,3\}$ and $u, v \in V_i$.
        Let $w$ and $z$ be the remaining vertices in $V_i$.
        Since  $D$ is competitive, $u$ and $v$ have a common out-neighbor in $V_j$ for some $j \in \{1,2,3\} \setminus \{i\}$.
        By Lemma~\ref{lem6} and \eqref{lem7:eq1}, $N^+(u) \cap V_j = \{v_1,v_2\}$ and
        \begin{equation}\label{lem7:eq2}
        N^+(v) \cap V_j = \{v_1,v_3\}
        \end{equation}
        for distinct vertices $v_1,v_2$, and $v_3$ in $V_j$.
        Then, by Lemma~\ref{lem6}, $N^+(v_1) \cap V_i= \{w,z\}$.
        Let $v_4$ be the remaining vertex in $V_j$.
        Then, by Lemma~\ref{lem6} again,
        $N^-(u) \cap V_j =\{v_3,v_4\}$ and
        $N^-(v) \cap V_j=\{v_2,v_4\}$.
        Therefore $N^-(v_4) \cap V_i = \{w,z\}$ by the same lemma and so $N^+(v_4) \cap V_i = \{u,v\}$.
  Thus $v_1$ and $v_4$ cannot have a common out-neighbor in $V_i$.
        Hence they have a common out-neighbor in $V_k$ for $k \in \{1,2,3\} \setminus \{i,j\}$.
        By Lemma~\ref{lem6} and \eqref{lem7:eq1} again, $N^+(v_1) \cap V_k = \{w_1,w_2\}$ and $N^+(v_4) \cap V_k = \{w_1,w_3\}$ for distinct vertices $w_1,w_2$, and $w_3$ in $V_k$.
        Let $w_4$ be the remaining vertex in $V_k$.
        Then, by Lemma~\ref{lem6},
        \[N^-(v_1) \cap V_k = \{w_3,w_4\} \quad \text{and} \quad N^-(v_4) \cap V_k = \{w_2,w_4\}.\]
        Meanwhile we note that $w_2$ and $u$ have a common out-neighbor in $V_j$.
        Since $N^+(u)\cap V_j=\{v_1,v_2\}$ and $N^+(v_1) \cap V_k = \{w_1,w_2\}$, $v_2$ is a common out-neighbor of $w_2$ and $u$.
        Then $N^+(w_2) \cap V_j = \{v_2,v_4\}$ by Lemma~\ref{lem6}.
        Thus $w_2$ and $v$ cannot have a common out-neighbor in $V_j$ by \eqref{lem7:eq2}.
        Since $w_2$ and $v$ belong to $V_k$ and $V_i$, respectively, they cannot compete in $D$ and we reach a contradiction.
        \end{proof}

    \begin{Thm}\label{thm8}
     The complete tripartite graph $K_{4,4,4}$ is not competitively orientable.
  \end{Thm}
        \begin{proof}
          Suppose, to the contrary, that there exists a competitive orientation $D$ of $K_{4,4,4}$. Let $V_1,V_2$, and $V_3$ be the partite sets of $D$.
        Then, by Lemmas~\ref{lem6} and~\ref{lem7}, for some distinct $i$ and $j$ in $\{1,2,3\}$, there is a pair of vertices $u_1$ and $u_2$ in $V_i$ such that $N^+(u_1) \cap V_j = N^+(u_2) \cap V_j = \{v_1,v_2\}$ for some vertices $v_1$ and $v_2$ in $V_j$. Without loss of generality, we may assume that $i=1$ and $j=2$.
        Let $u_3$ and $u_4$ (resp.\ $v_3$ and $v_4$) be the remaining vertices in $V_1$ (resp.\ $V_2$).
        Then, by Lemma~\ref{lem6},
        \[N^+(u_3) \cap V_2 = N^+(u_4) \cap V_2 = \{v_3,v_4\},\]
        \[N^+(v_1) \cap V_1 = N^+(v_2) \cap V_1 = \{u_3,u_4\},\]
        \[N^+(v_3) \cap V_1 = N^+(v_4) \cap V_1 = \{u_1,u_2\}\]
        (see Figure~\ref{thm8:fig1} for an illustration).
\begin{figure}
\begin{center}
\begin{tikzpicture}[auto,thick, scale=0.9]
    \tikzstyle{player}=[minimum size=5pt,inner sep=0pt,outer sep=0pt,draw,circle]
    \tikzstyle{source}=[minimum size=5pt,inner sep=0pt,outer sep=0pt,ball color=black, circle]
    \tikzstyle{arc}=[minimum size=5pt,inner sep=1pt,outer sep=1pt, font=\footnotesize]
    \tikzset{->-/.style={decoration={
  markings,
  mark=at position #1 with {\arrow{>}}},postaction={decorate}}}

    \draw (180:3.2cm) node (name) {$V_1$};
    \draw (0:3.2cm) node (name) {$V_2$};

    \draw (270:3.2cm) node (name) {the arcs from $V_1$ to $V_2$};

    \path (-2,1.5)    node [player,label=left:$u_1$]  (u1) {};
    \path (-2,0.5)   node [player,label=left:$u_2$]  (u2) {};
    \path (-2,-0.5)   node [player,label=left:$u_3$]  (u3) {};
    \path (-2,-1.5)   node [player,label=left:$u_4$]  (u4) {};
    \path (2,1.5)   node [player,label=right:$v_1$]  (v1) {};
    \path (2,0.5)   node [player,label=right:$v_2$]  (v2) {};
    \path (2,-0.5)   node [player,label=right:$v_3$]  (v3) {};
    \path (2,-1.5)   node [player,label=right:$v_4$]  (v4) {};

    \draw[black,thick,->-=0.85] (u1) -- (v1);
    \draw[black,thick,->-=0.85] (u1) - + (v2);
    \draw[black,thick,->-=0.85] (u2) - + (v1);
    \draw[black,thick,->-=0.85] (u2) - + (v2);
    \draw[black,thick,->-=0.85] (u3) - + (v3);
    \draw[black,thick,->-=0.85] (u3) - + (v4);
    \draw[black,thick,->-=0.85] (u4) - + (v3);
    \draw[black,thick,->-=0.85] (u4) - + (v4);

\end{tikzpicture}
\hspace{2em}
\begin{tikzpicture}[auto,thick, scale=0.9]
    \tikzstyle{player}=[minimum size=5pt,inner sep=0pt,outer sep=0pt,draw,circle]
    \tikzstyle{source}=[minimum size=5pt,inner sep=0pt,outer sep=0pt,ball color=black, circle]
    \tikzstyle{arc}=[minimum size=5pt,inner sep=1pt,outer sep=1pt, font=\footnotesize]
    \tikzset{->-/.style={decoration={
  markings,
  mark=at position #1 with {\arrow{>}}},postaction={decorate}}}

    \draw (180:3.2cm) node (name) {$V_1$};
    \draw (0:3.2cm) node (name) {$V_2$};

    \draw (270:3.2cm) node (name) {the arcs from $V_2$ to $V_1$};

    \path (-2,1.5)    node [player,label=left:$u_1$]  (u1) {};
    \path (-2,0.5)   node [player,label=left:$u_2$]  (u2) {};
    \path (-2,-0.5)   node [player,label=left:$u_3$]  (u3) {};
    \path (-2,-1.5)   node [player,label=left:$u_4$]  (u4) {};
    \path (2,1.5)   node [player,label=right:$v_1$]  (v1) {};
    \path (2,0.5)   node [player,label=right:$v_2$]  (v2) {};
    \path (2,-0.5)   node [player,label=right:$v_3$]  (v3) {};
    \path (2,-1.5)   node [player,label=right:$v_4$]  (v4) {};

    \draw[black,thick,->-=0.85] (v1) -- (u3);
    \draw[black,thick,->-=0.85] (v1) - + (u4);
    \draw[black,thick,->-=0.85] (v2) - + (u3);
    \draw[black,thick,->-=0.85] (v2) - + (u4);
    \draw[black,thick,->-=0.85] (v3) - + (u1);
    \draw[black,thick,->-=0.85] (v3) - + (u2);
    \draw[black,thick,->-=0.85] (v4) - + (u1);
    \draw[black,thick,->-=0.85] (v4) - + (u2);

\end{tikzpicture}
\caption{The arcs between $V_1$ and $V_2$}\label{thm8:fig1}
\end{center}
\end{figure}
        Therefore each of the following pairs does not have a common out-neighbor in $V_2$: $\{u_1,u_3\}$; $\{u_1,u_4\}$; $\{u_2,u_3\}$; $\{u_2,u_4\}$.
        In addition, each of the following pairs does not have a common out-neighbor in $V_1$: $\{v_1,v_3\}$; $\{v_1,v_4\}$; $\{v_2,v_3\}$; $\{v_2,v_4\}$. Then each of these pairs has a common out-neighbor in $V_3$.
        Let $w_1,w_2,w_3$, and $w_4$ be the common out-neighbors of $\{u_1,u_3\},\{u_1,u_4\},\{u_2,u_3\}$, and $\{u_2,u_4\}$, respectively.
        Then, by Lemma~\ref{lem6}, $w_i \neq w_j$ for distinct $i,j \in \{1,2,3,4\}$ and so $V_3=\{w_1,w_2,w_3,w_4\}$.
        Without loss of generality, we may assume that $w_1,w_2,w_3$, and $w_4$ are the common out-neighbors of $\{v_1,v_3\},\{v_1,v_4\},\{v_2,v_3\}$, and $\{v_2,v_4\}$, respectively.
        Then, by Lemma~\ref{lem6}, $w_1$ and $w_2$ are out-neighbors of $u_1$ and $w_3$ and $w_4$ are out-neighbors of $v_2$ in $V_3$, so $u_1$ and $v_2$ do not compete in $D$, which is a contradiction.
        \end{proof}

Now we are ready to prove Theorem~\ref{thm:complete-3-partite}.

\begin{proof}[Proof of Theorem~\ref{thm:complete-3-partite}]
To show the ``only if" part, suppose that
$D$ is a competitive orientation of $K_{n_1,n_2,n_3}$.
Then, by Proposition~\ref{thm4},
$n_i \geq 4$ for each $1\leq i \leq 3$.
If $n_1=4$, then $n_2=n_3=4$, which contradicts Theorem~\ref{thm8}.
Therefore $n_1 \geq 5$ and so the ``only if" part is true.

Now we show the ``if" part.
Let $D_\alpha$ be the digraph whose adjacency matrix is $A_4$ given in Figure~\ref{fig:3-partite-complete}.
It is easy to check that $D_\alpha$ is an orientation of $K_{5,4,4}$ and
the inner product of each pair of rows in each matrix is nonzero, so $D_\alpha$ is competitive.
If $n_1\geq5$ and $n_3\geq 4$, then, by applying Corollary~\ref{lem:making-multi-complete} to $D_\alpha$,
we obtain a competitive orientation $D'_\alpha$ of $K_{n_1,n_2,n_3}$.
\end{proof}
        \begin{figure}
    \centering
    \begin{equation*}
A_4=
\left(\begin{array}{*{13}c}
 0&0&0&0&0&0&1&0&1&0&1&1&0\\
 0&0&0&0&0&0&1&1&0&1&0&0&1\\
 0&0&0&0&0&1&0&0&1&1&0&0&1\\
 0&0&0&0&0&1&0&1&0&0&1&0&1\\
 0&0&0&0&0&1&0&1&0&1&0&1&0\\
 1&1&0&0&0&0&0&0&0&1&1&0&0\\
 0&0&1&1&1&0&0&0&0&1&1&0&0\\
 1&0&1&0&0&0&0&0&0&0&0&1&1\\
 0&1&0&1&1&0&0&0&0&0&0&1&1\\
 1&0&0&1&0&0&0&1&1&0&0&0&0\\
 0&1&1&0&1&0&0&1&1&0&0&0&0\\
 0&1&1&1&0&1&1&0&0&0&0&0&0\\
 1&0&0&0&1&1&1&0&0&0&0&0&0
\\
     \end{array}
    \right)
    \end{equation*}
  \caption{
  The adjacency matrix $A_4$ which is an orientation of $K_{5,4,4}$ in the proof of Theorem~\ref{thm:complete-3-partite}
  }\label{fig:3-partite-complete}
\end{figure}

\subsection{Proofs of  Theorems~\ref{thm:complete-4-partite} and \ref{thm:complete-5-partite}}\label{sec:4,5-partite}
In this subsection, we characterize complete $k$-partite graphs which are competitively orientable for the cases $k=4$ and $k=5$.

\begin{Prop} \label{prop:n.b.of.size.1}
Any competitive $k$-partite tournament for $k \in \{4,5\}$ has at most $k-3$ singleton partite sets.
\end{Prop}

\begin{proof}
Let $V_1, V_2, \ldots, V_k$ be the partite sets of a competitive $k$-partite tournament $D$ for some $k \in \{4.5\}$.
We may assume that $x \in V_1$, $y \in V_2$, and $z \in V_3$.

We suppose $k=4$.
To reach a contradiction, suppose that there are at least $2$ partite sets of size $1$.
Without loss of generality, we may assume $|V_1|=|V_2|=1$.
Then $V_1=\{x\}$ and $V_2=\{y\}$.
Without loss of generality, we may assume $z$ is a common out-neighbor of $x$ and $y$.
Then $N^+(z) \subseteq V_4$, which contradicts Proposition~\ref{prop:char-competi-digraph}(2).
Therefore $D$ has at most $1$ partite set of size $1$.

Suppose $k=5$.
To reach a contradiction, suppose that there are at least $3$ partite sets of size $1$.
Without loss of generality, we may assume $|V_1|=|V_2|=|V_3|=1$.
Then $V_1=\{x\}$, $V_2=\{y\}$, and $V_3=\{z\}$.
Suppose that $x$ and $y$ have a common out-neighbor $w$ in $V_4\cup V_5$.
Without loss of generality, we may assume $w\in V_4$.
Then $N^+(w) \subseteq V_3 \cup V_5$.
By Proposition~\ref{prop:char-competi-digraph}(2),
$N^+(w) \cap V_3 \neq \emptyset $ and $N^+(w) \cap V_5 \neq \emptyset $.
However, $N^+(w) \cap V_3=\{z\}$, which contradicts Proposition~\ref{prop:two-partite-out-neighbor-2-2}.
Thus $w \notin V_4\cup V_5$ and so $w=z$.
By symmetry, the only possible common out-neighbor of $y$ and $z$ is $x$.
Since $z\in N^+(x)$, $x$ cannot be an out-neighbor of $z$ and we reach a contradiction.
Therefore $D$ has at most $2$ partite sets having size $1$.
\end{proof}

By Theorem~\ref{thm:complete-condition-outdegree-3},
the out-neighbors of a vertex of outdegree $3$ in a competitive $k$-partite tournament for some $k \in \{4,5\}$ form a directed cycle and we have the following lemma.
\begin{Lem} \label{lem:degree3-partite-set-sizes}
Let $D$ be a competitive $k$-partite tournament for some $4 \leq k \leq 5$.
Suppose that a vertex $u$ has outdegree $3$.
If $N^+(u)\subseteq U \cup V \cup W$ for distinct partite sets $U$, $V$, and $W$ of $D$,
then $|U|+ |V|+ |W| \leq |V(D)|-4$.
\end{Lem}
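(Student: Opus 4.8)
The plan is to combine the structural information already extracted in Proposition~\ref{prop:complete-condition-outdegree-3} with a counting argument on the partite sets not meeting $N^+(u)$. Let $u$ be a vertex of outdegree $3$ with $N^+(u)=\{v_1,v_2,v_3\}$, and by Lemma~\ref{lem:complete-outdegree-3} assume $v_1\to v_2\to v_3\to v_1$ is a directed cycle. As in the proof of Proposition~\ref{prop:complete-condition-outdegree-3}, choose $w_i$ to be a common out-neighbor of $v_i$ and $v_{i+1}$ (indices mod $3$); that proof shows $w_1,w_2,w_3$ are pairwise distinct and distinct from $u,v_1,v_2,v_3$, so $\tilde D$ with seven vertices $\{u,v_1,v_2,v_3,w_1,w_2,w_3\}$ is a subdigraph of $D$. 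Crucially, the argument there also shows $u,v_1,v_2,v_3$ lie in four distinct partite sets: indeed $N^+(u)=\{v_1,v_2,v_3\}$ forces $v_1,v_2,v_3$ out of $u$'s partite set, and the directed cycle on $\{v_1,v_2,v_3\}$ forces those three into distinct partite sets. Hence if $N^+(u)\subseteq U\cup V\cup W$, then $u\notin U\cup V\cup W$, so the partite set containing $u$ contributes at least $1$ vertex outside $U\cup V\cup W$.

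Next I would locate three more vertices outside $U\cup V\cup W$. The three vertices $w_1,w_2,w_3$ are the natural candidates. First, none of them lies in the partite set of $u$: if $w_i$ and $u$ were in the same partite set, then since $\{v_1,v_2,v_3\}=N^+(u)\subseteq N^-(w_i)$ (as $w_i$ is an out-neighbor of two of the $v_j$'s — actually we must be slightly careful and instead use that $w_i\in N^+(v_i)\cap N^+(v_{i+1})$), we still need $w_i$ to have a common out-neighbor with $u$; but $N^+(u)=\{v_1,v_2,v_3\}$ and $w_i$ cannot have any $v_j$ with $(w_i,v_j)\in A(D)$ and simultaneously $v_j\to w_i$, contradiction for at least the two indices $i,i+1$. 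So I would argue $w_1,w_2,w_3$ avoid $u$'s partite set. Now the key point: each $w_i$ has $v_i,v_{i+1}$ among its in-neighbors, so $w_i$ cannot lie in the partite set of $v_i$ or $v_{i+1}$; thus $w_i$ can only possibly lie in the partite set of the single remaining $v_j$. If some $w_i$ lies in $U\cup V\cup W$, it must be in the one of $U,V,W$ containing that $v_j$. I would then show that at most one of $w_1,w_2,w_3$ can be "absorbed" this way, or more robustly: combine $w_1,w_2,w_3$ with the fact that a common out-neighbor of each pair $w_i,w_j$ must exist (by completeness of $C(D)$) and cannot be any of $u,v_1,v_2,v_3$ (since $N^+(u)=\{v_1,v_2,v_3\}$ gives $u\notin N^+(w_i)$ is false in general — rather, $v_j\in N^+(w_i)$ together with the in-neighbor structure of $\tilde D$ rules these out), forcing enough new vertices.

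Concretely, the cleanest route is: the four vertices $u,w_1,w_2,w_3$ all lie outside $u$'s-partite-set-aside; I will show $\{u,w_1,w_2,w_3\}$ contributes at least $4$ vertices to $V(D)\setminus(U\cup V\cup W)$. We already have $u\notin U\cup V\cup W$. For the $w_i$: since $w_i$ has both $v_i$ and $v_{i+1}$ as in-neighbors, $w_i$ is not in the partite set of $v_i$ nor of $v_{i+1}$, so at most one $w_i$ can belong to each of $U,V,W$, and moreover a $w_i$ in, say, the part of $U$ (the part of $v_j$, the "third" $v$) would need a common out-neighbor with $v_j$; tracing which vertex that can be (it cannot be $u,v_1,v_2,v_3$, nor $v_j$ itself, nor $w_i$) shows it is a fresh vertex, but that alone isn't an outside vertex. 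The decisive inequality is that $u,v_1,v_2,v_3,w_1,w_2,w_3$ are $7$ distinct vertices with $v_1,v_2,v_3,u$ in four distinct parts, so $U\cup V\cup W$ omits $u$ and, by the pigeonhole argument on the in-neighbor constraints, omits at least two of $w_1,w_2,w_3$; handling the residual case (exactly one $w_i$ inside) requires producing one more outside vertex — here I would invoke the $|V(D)|\ge 9$ (resp. $\ge 10$) bound and the adjacency analysis of Proposition~\ref{prop:complete-condition-outdegree-3} to finish. The main obstacle I anticipate is precisely this bookkeeping: showing that whenever one of the $w_i$'s is "hidden" inside $U\cup V\cup W$, the extra vertices forced by the common-out-neighbor requirements for the $w_i$-pairs (and the fact that $D$ is $k$-partite with $k\le 5$, limiting how those can coincide) still leave at least $4$ vertices outside $U\cup V\cup W$, i.e. $|U|+|V|+|W|\le|V(D)|-4$.
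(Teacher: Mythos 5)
Your setup is the same as the paper's (invoke Proposition~\ref{prop:complete-condition-outdegree-3} to get the subdigraph $\tilde{D}$ on $\{u,v_1,v_2,v_3,w_1,w_2,w_3\}$, note $u,v_1,v_2,v_3$ lie in four distinct partite sets, and count vertices outside $U\cup V\cup W$), but your argument has a genuine gap exactly where you flag it: you only exclude each $w_i$ from the partite sets of $v_i$ and $v_{i+1}$, leave open the ``residual case'' that some $w_i$ sits in the partite set of the third vertex $v_j$, and propose to close it by invoking $|V(D)|\geq 9$ (resp.\ $\geq 10$). That fallback cannot work: the lemma bounds $|U|+|V|+|W|$ \emph{relative} to $|V(D)|$, and a lower bound on $|V(D)|$ alone says nothing about how large $U$, $V$, $W$ may be, so it cannot supply the missing fourth vertex outside $U\cup V\cup W$. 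As written, the proposal does not prove the statement.

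The missing observation is that the subdigraph $\tilde{D}$ of Figure~\ref{fig:subdigraph-degree3} already contains the arcs $(w_1,v_3)$, $(w_2,v_1)$, $(w_3,v_2)$, so the residual case never arises. Indeed, since $C(D)$ is complete, $w_i$ and $u$ must have a common out-neighbor, which lies in $N^+(u)=\{v_1,v_2,v_3\}$; as $v_i$ and $v_{i+1}$ are in-neighbors of $w_i$ (no directed $2$-cycles), that common out-neighbor must be the remaining vertex $v_j$, giving $(w_i,v_j)\in A(D)$. Hence each of $u,w_1,w_2,w_3$ is adjacent to all three of $v_1\in U$, $v_2\in V$, $v_3\in W$, so none of these four vertices lies in $U\cup V\cup W$ (this is precisely how the paper argues, using statement ($\S$)-type adjacency to all out-neighbors of $u$), and $|U|+|V|+|W|\leq |V(D)|-4$ follows at once, with no case analysis. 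You actually had both ingredients in hand --- the in-neighbor constraints and the common-out-neighbor requirement with $u$ --- but used the latter only to keep $w_i$ out of $u$'s partite set rather than to pin down the arc $(w_i,v_j)$, which is what makes the count close.
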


\begin{proof}
Suppose that $N^+(u)\subseteq U \cup V \cup W$ for distinct partite sets $U$, $V$, and $W$ of $D$.
Since $u$ has outdegree $3$,
by Theorem~\ref{thm:complete-condition-outdegree-3},
$D$ contains a subdigraph isomorphic to $\tilde{D}$ given in Figure~\ref{fig:subdigraph-degree3}.
We may assume that the subdigraph is $D_1$ itself including labels.
We may assume $v_1 \in U$, $v_2 \in V$, and $v_3 \in W$.
Then $\{u,w_1,w_2,w_3\} \cap (U\cup V \cup W) = \emptyset$.
Thus $|V(D) \setminus (U\cup V \cup W)| \geq 4$ and so $|U|+ |V|+ |W|=|U\cup V \cup W| \leq |V(D)| -4$.
\end{proof}

\begin{Cor} \label{cor:no-or-3,3,2,2}
Neither $K_{3,3,2,2}$ nor $K_{3,3,3,1}$ is  competitively orientable.
\end{Cor}
\begin{proof}
Suppose, to the contrary, that there exists a competitive orientation $D$ of $K_{3,3,2,2}$ or $K_{3,3,3,1}$. Then $|A(D)| < 40$.
If each vertex in $D$ has outdegree at least $4$, then $|A(D)| \geq 40$, which is a contradiction.
Therefore there exists a vertex $u$ of outdegree $3$, then, the out-neighbors of $u$ belong to three distinct partite sets $U$, $V$, and $W$ by Proposition~\ref{prop:char-competi-digraph}(3) and, by Lemma~\ref{lem:degree3-partite-set-sizes},
$|U|+|V|+|W| \leq |V(D)| - 4=6$, which is impossible.
\end{proof}

\begin{Lem} \label{lem:complete-3-partite}
Let $n_1$, $n_2$, and $n_3$ be positive integers such that $n_1 \geq n_2 \geq n_3$.
If $K_{n_1,n_2,n_3,1}$ is competitively orientable,
then $n_3 \geq 3$.
\end{Lem}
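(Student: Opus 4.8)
The plan is to rule out the two values $n_3=1$ and $n_3=2$ separately, the first being immediate and the second being the heart of the matter.

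If $n_3=1$, then $D$ has two partite sets of size $1$ (the one of size $n_3$ and the one of size $n_4=1$), so it has strictly more than $k-3=1$ partite sets of size $1$; this contradicts Proposition~\ref{prop:n.b.of.size.1}. So I would then assume $n_3=2$ and, for contradiction, that some orientation $D$ of $K_{n_1,n_2,2,1}$ has complete competition graph, with $n_1\ge n_2\ge 2$. By Proposition~\ref{prop:indegree-condition-complete} I may assume every vertex of $D$ has indegree at least $2$ (this changes $D$ but not the partite sizes); write $V_1,V_2$ for the two largest partite sets, $V_3=\{z_1,z_2\}$, and $V_4=\{x\}$. A first observation: none of $z_1,z_2,x$ can have outdegree $3$, for if say $z_1$ did, then by Lemma~\ref{lem:complete-outdegree-3} its out-neighbours would lie in three distinct partite sets, necessarily $V_1,V_2,V_4$, and Lemma~\ref{lem:degree3-partite-set-sizes} would force $n_1+n_2+1=|V_1|+|V_2|+|V_4|\le|V(D)|-4=n_1+n_2-1$, which is absurd (the same computation rules out outdegree $3$ for $z_2$ and for $x$). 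Hence $z_1,z_2,x$ all have outdegree at least $4$, so $\mathrm{indeg}(z_i)\le n_1+n_2-3$ and $\mathrm{indeg}(x)\le n_1+n_2-2$.

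Next I would dispose of the subcase $n_2=2$ directly. Here every vertex $v\in V_1$ has underlying degree $|V(D)|-|V_1|=n_2+3=5$, hence (indegree $\ge2$, outdegree $\ge3$) outdegree exactly $3$; by Lemma~\ref{lem:complete-outdegree-3} its out-neighbourhood is a directed $3$-cycle using one vertex from each of $V_2,V_3,V_4$, so in particular $v\to x$. Thus $V_1\subseteq N^-(x)$, so $N^+(x)\subseteq V_2\cup V_3$; but $N^+(x)$ must meet at least two partite sets (Lemma~\ref{lem:two-partite-set-complete}), hence both $V_2$ and $V_3$, and then Proposition~\ref{prop:two-partite-out-neighbor-2-2} forces $N^+(x)=V_2\cup V_3$. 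Now for any $v\in V_1$ with $N^+(v)=\{x,z_i,y_j\}$, the arcs $x\to z_i$ and $x\to y_j$ prevent $\{x,z_i,y_j\}$ from being a directed $3$-cycle, a contradiction. So from now on $n_2\ge3$ (whence $n_1\ge3$); if in addition $n_1\le3$, i.e.\ $n_1=n_2=3$, then Lemma~\ref{lem:making-multi-complete} produces an orientation of $K_{3,3,2,2}$ with complete competition graph, contradicting Corollary~\ref{cor:no-or-3,3,2,2}. It then remains to treat $n_1\ge4$, $n_2\ge3$. For this range I would use the following structural facts, obtained from Lemma~\ref{lem:two-partite-set-complete} and Proposition~\ref{prop:two-partite-out-neighbor-2-2} together with the fact that $V_4$ is a singleton: $N^+(z_i)$ meets both $V_1$ and $V_2$; $N^+(v)$ meets both $V_2$ and $V_3$ for every $v\in V_1$; and $N^+(v)$ meets both $V_1$ and $V_3$ for every $v\in V_2$. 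Writing $A_c=N^-(c)\cap V_1$ and $B_c=N^-(c)\cap V_2$ for $c\in\{z_1,z_2,x\}$, these give $A_{z_1}\cup A_{z_2}=V_1$, $B_{z_1}\cup B_{z_2}=V_2$, $A_{z_i}\subsetneq V_1$, and $B_{z_i}\subsetneq V_2$. Since $D$ is a tournament, every common out-neighbour of a vertex of $V_1$ and a vertex of $V_2$ lies in $V_3\cup V_4=\{z_1,z_2,x\}$, so the three rectangles $A_{z_1}\times B_{z_1}$, $A_{z_2}\times B_{z_2}$, $A_x\times B_x$ cover $V_1\times V_2$; chasing the ``anti-matched'' corner cells forces $A_x\supseteq V_1\setminus(A_{z_1}\cap A_{z_2})$ and $B_x\supseteq V_2\setminus(B_{z_1}\cap B_{z_2})$. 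Combining these inclusions with the indegree bounds above and the exact degree identities for $z_1,z_2,x$ pins down $|A_{z_1}\cap A_{z_2}|$ and $|B_{z_1}\cap B_{z_2}|$ from both sides, and bringing in the remaining competition constraints — those involving $z_1,z_2,x$ and their out-neighbours inside $V_1\cup V_2$, and the pairs of type $V_1$--$V_3$, $V_2$--$V_3$, $V_i$--$V_4$ — should yield the contradiction, with Corollary~\ref{cor:no-or-3,3,2,2} and Lemma~\ref{lem:making-multi-complete} available to eliminate any small parameters that survive.

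The step I expect to be the real obstacle is precisely this last one. For $n_1\ge4$, $n_2\ge3$ with $(n_1-1)(n_2-1)\ge11$, the digraph $D$ need not contain any vertex of outdegree $3$, so the structural machinery of Proposition~\ref{prop:complete-condition-outdegree-3} (and the Lemma~\ref{lem:orientation-8-vertices}-type size estimates) does not apply, and one must squeeze the contradiction out of the three vertices $z_1,z_2,x$ alone; the crude ``three rectangles covering a grid'' counting is by itself too weak, so one has to use the out-neighbourhood structure of $x$ (not merely $\mathrm{indeg}(x)$) and the way $x$ competes with the vertices of $V_3$. Carrying this out uniformly for all $n_1\ge n_2\ge3$ is, I anticipate, the most delicate part of the argument.
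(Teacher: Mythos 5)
Your handling of $n_3=1$ and the pieces you actually complete for $n_3=2$ are correct: the exclusion of outdegree~$3$ for the vertices of $V_3\cup V_4$ via Lemmas~\ref{lem:complete-outdegree-3} and~\ref{lem:degree3-partite-set-sizes}, the subcase $n_2=2$ after the normalization of Proposition~\ref{prop:indegree-condition-complete}, and the elimination of $K_{3,3,2,1}$ via Lemma~\ref{lem:making-multi-complete} and Corollary~\ref{cor:no-or-3,3,2,2}. But the central case $n_1\ge 4$, $n_2\ge 3$, $n_3=2$ is not proved. There you only record that the three rectangles $A_{z_1}\times B_{z_1}$, $A_{z_2}\times B_{z_2}$, $A_x\times B_x$ cover $V_1\times V_2$, together with some degree bounds, and assert that combining these with ``the remaining competition constraints'' should yield a contradiction -- and you yourself flag this step as the expected obstacle and concede the counting is too weak on its own. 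As it stands this is a plan, not an argument: nothing in the sketch actually produces the contradiction, so the lemma is not established for exactly the range of parameters where it matters.

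The missing idea is much simpler and makes essentially all of your case analysis unnecessary. Write $V_3=\{x_1,x_2\}$ and let $y$ be a common out-neighbour of $x_1$ and $x_2$ (it exists since $C(D)$ is complete); note $N^+(y)\cap V_3=\emptyset$. If $y$ belonged to $V_1\cup V_2$, then $N^+(y)$ would lie in exactly two partite sets, one of which is the singleton $V_4$, contradicting Proposition~\ref{prop:two-partite-out-neighbor-2-2}; hence $y$ is the vertex of $V_4$ and $N^+(y)\subseteq V_1\cup V_2$. Now take any $u\in N^+(y)$. Since $(y,u)\in A(D)$, we have $N^+(u)\cap V_4=\emptyset$, so $N^+(u)$ lies in exactly two partite sets, one of which is $V_3$ of size $2$; Proposition~\ref{prop:two-partite-out-neighbor-2-2} then forces $V_3\subseteq N^+(u)$, so $u$ beats $x_1$ and in particular $u\notin N^+(x_1)$. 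Thus $x_1$ and $y$ have no common out-neighbour, contradicting the completeness of $C(D)$. This is the paper's proof of the lemma; it works uniformly for all $n_1\ge n_2\ge 2$ and needs neither the indegree normalization, nor the outdegree-$3$ machinery, nor the covering-of-$V_1\times V_2$ bookkeeping you were attempting.
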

\begin{proof}
Suppose that there exists a competitive orientation $D$ of $K_{n_1,n_2,n_3,1}$.
Then $n_3 \geq 2$ by Proposition~\ref{prop:n.b.of.size.1}.
Suppose, to the contrary, that $n_3=2$.
Let $V_1, \ldots, V_4$ be the partite sets of $D$ satisfying $|V_1|=n_1$, $|V_2|=n_2$, $|V_3|=2$, and $|V_4|=1$.
Let $V_3=\{x_1,x_2\}$ and $y$ be a common out-neighbor of $x_1$ and $x_2$.
Then $V_3 \cap N^+(y) = \emptyset$, so, by Proposition~\ref{prop:char-competi-digraph}(2), $N^+(y)$ is included in exactly two partite sets.
If $y \in V_1\cup V_2$, then
$|N^+(y) \cap V_4 |=1$, which contradicts Proposition~\ref{prop:two-partite-out-neighbor-2-2}.
Therefore $y \notin V_1\cup V_2$ and so
$y \in V_4$. Thus $V_4=\{y\}$.
Hence $N^+(y) \subseteq V_1 \cup V_2$.
Take a vertex $u$ in $N^+(y)$.
Then $N^+(u) \subseteq V_1 \cup V_3$ or $V_2 \cup V_3$.
Therefore $N^+(u)$ is included in exactly two partite sets by Proposition~\ref{prop:char-competi-digraph}(2).
Since $|V_3|=2$,
 $N^+(u)\cap V_3=V_3$, that is, $u$ is a out-neighbor of neither $x_1$ nor $x_2$, by Proposition~\ref{prop:two-partite-out-neighbor-2-2}.
Since $u$ was arbitrarily chosen in $N^+(y)$,
any out-neighbor of $y$ is a out-neighbor of neither $x_1$ nor $x_2$.
Thus $x_1$ and $y$ have no common out-neighbor in $D$, which is a contradiction.
Hence $n_3 \neq 2$ and so $n_3 \geq 3$.
\end{proof}
Now we are ready to show Theorem~\ref{thm:complete-4-partite}.
\begin{proof}[Proof of Theorem~\ref{thm:complete-4-partite}]
To show the ``only if" part,
suppose that $D$ is a competitive orientation of $K_{n_1,n_2,n_3,n_4}$.

{\it Case 1}. $n_4=1$.
Then $n_3 \geq 3$ by Lemma~\ref{lem:complete-3-partite}, so $n_1 \geq 3$.
If $n_1 =3$, then $n_1=n_2=n_3=3$ and so $D$ is an orientation of $K_{3,3,3,1}$, which contradicts Corollary~\ref{cor:no-or-3,3,2,2}.
Therefore $n_1 \geq 4$.

{\it Case 2}. $n_4 \geq 2$.
Then $n_3 \geq 2$ and so (c) holds.
Suppose $n_3=2$. Then $n_4=2$.
If $n_1=3$, then, by applying Corollary~\ref{lem:making-multi-complete} to $D$, we obtain a competitive orientation $D^*$ of $K_{3,3,2,2}$, which contradicts Corollary~\ref{cor:no-or-3,3,2,2}.
Therefore $n_1 \geq 4$.
Thus the ``only if" part is true.

Now we show the ``if" part.
Let $D_\alpha$, $D_\beta$, $D_\gamma$ be the digraphs whose adjacency matrices are $A_5$, $A_6$, and $A_7$, respectively, given in Figure~\ref{fig:4-partite-complete}.
It is easy to check that $D_\alpha$, $D_\beta$, and $D_\gamma$ are orientations of $K_{4,3,3,1}$, $K_{4,2,2,2}$, and $K_{3,3,3,2}$, respectively, and the inner product of each pair of rows in each matrix is nonzero, so  $D_\alpha$, $D_\beta$, and $D_\gamma$ are competitive.
By applying Corollary~\ref{lem:making-multi-complete} to $D_\alpha$, $D_\beta$, and $D_\gamma$,
we may obtain orientations $D'_\alpha$, $D'_\beta$, and $D'_\gamma$ of $K_{n_1,n_2,n_3,n_4}$ each of which are competitive for (a) $n_1 \geq 4$, $n_3 \geq 3$, and $n_4 \geq 1$; (b) $n_1 \geq 4$, $n_3 =2$, and $n_4 =2$;
(c) $n_3 \geq 3$ and $n_4 \geq 2$, respectively.
Therefore we have shown that the ``if" part is true.
\end{proof}

   \begin{figure}
  \centering
    \begin{equation*}
A_5=
\left(\begin{array}{*{11}c}
  0& 0& 0& 0& 1& 0& 0& 1& 0& 0& 1\\
  0& 0& 0& 0& 0& 1& 0& 0& 1& 0& 1\\
  0& 0& 0& 0& 1& 0& 1& 0& 1& 1& 0\\
  0& 0& 0& 0& 0& 1& 1& 1& 0& 1& 0\\
  0& 1& 0& 1& 0& 0& 0& 1& 1& 0& 0\\
  1& 0& 1& 0& 0& 0& 0& 1& 1& 0& 0\\
  1& 1& 0& 0& 0& 0& 0& 0& 0& 1& 1\\
  0& 1& 1& 0& 0& 0& 1& 0& 0& 0& 1\\
  1& 0& 0& 1& 0& 0& 1& 0& 0& 0& 1\\
  1& 1& 0& 0& 1& 1& 0& 0& 0& 0& 0\\
  0& 0& 1& 1& 1& 1& 0& 0& 0& 1& 0\\
     \end{array}
        \right)
    \end{equation*}
    \begin{equation*}
    A_6=
\left(\begin{array}{*{10}c}
 0 & 0 & 0 & 0 &
 1 & 0
 & 1 & 0
 & 1 & 0   \\
 0 & 0 & 0 & 0 &
 0 & 1
 & 0 & 1
 & 1 & 0   \\
  0 & 0 & 0 & 0 &
 1 & 0
 & 0 & 1
 & 0 & 1   \\
  0 & 0 & 0 & 0 &
 0 & 1
 & 1 & 0
 & 0 & 1   \\
   0 & 1 & 0 & 1 &
 0 & 0
 & 1 & 1
 & 0 & 0   \\
   1 & 0 & 1 & 0 &
 0 & 0
 & 1 & 1
 & 0 & 0   \\
 0 & 1 & 1 & 0 &
 0 & 0
 & 0 & 0
 & 1 & 1   \\
 1 & 0 & 0 & 1 &
 0 & 0
 & 0 & 0
 & 1 & 1   \\
 0 & 0 & 1 & 1 &
 1 & 1
 & 0 & 0
 & 0 & 0   \\
 1& 1 & 0 & 0 &
 1 & 1
 & 0 & 0
 & 0 & 0   \\
   \end{array}
    \right)
    \end{equation*}
     \begin{equation*}
 A_7=
\left(\begin{array}{*{11}c}
0&0&0&1&0&0&0&0&1&1&1\\
0&0&0&0&1&0&0&1&0&1&1\\
0&0&0&1&0&1&1&0&1&0&1\\
0&1&0&0&0&0&0&1&1&1&0\\
1&0&1&0&0&0&0&1&0&0&1\\
1&1&0&0&0&0&1&0&0&1&0\\
1&1&0&1&1&0&0&0&0&0&0\\
1&0&1&0&0&1&0&0&0&1&0\\
0&1&0&0&1&1&0&0&0&0&1\\
0&0&1&0&1&0&1&0&1&0&0\\
0&0&0&1&0&1&1&1&0&0&0\\
  \end{array}
    \right)
    \end{equation*}
      \caption{The adjacency matrices $A_5$, $A_6$, $A_7$ which are orientations of $K_{4,3,3,1}$, $K_{4,2,2,2}$, and $K_{3,3,3,2}$ in the proof of Theorem~\ref{thm:complete-4-partite}.}
      \label{fig:4-partite-complete}
    \end{figure}

In the following, we study $5$-partite tournaments which are competitive.

\begin{Thm} \label{thm:no-orientation-5-partite-32211}
The complete $5$-partite graph $K_{3,2,2,1,1}$
is not competitively orientable.
\end{Thm}

\begin{proof}
Suppose, to the contrary, that there exists a competitive orientation $D$ of $K_{3,2,2,1,1}$.
Let $V_1,\ldots,V_5$ be the partite sets of $D$ with $|V_1|=3$, $|V_2|=|V_3|=2$, and $|V_4|=|V_5|=1$.
Since $4|V(D)|-|A(D)|=5$,
\begin{itemize}
\item[($\dagger$)]
there exist at least $5$ vertices of outdegree $3$ in $D$
\end{itemize}
 by Proposition~\ref{prop:char-competi-digraph}(4).
Take a vertex $u$ of outdegree $3$.
Then $D$ contains a subdigraph containing $u$ isomorphic to $\tilde{D}$ given in Figure~\ref{fig:subdigraph-degree3} by Theorem~\ref{thm:complete-condition-outdegree-3}.
We may assume that the subdigraph is $D_1$ itself including labels.
For each $i=1,2,3$,
since $w_i$ is adjacent to each of $v_1$, $v_2$ and $v_3$ in $D$,
\begin{itemize}
\item[($\S$)] $w_i$ cannot belong to a partite set containing an out-neighbor of $u$.
\end{itemize}
By Proposition~\ref{prop:char-competi-digraph}(3),
the out-neighbors
$v_1,v_2$, and $v_3$ of $u$ belong to three distinct partite sets $U$, $V$, and $W$.
By Lemma~\ref{lem:degree3-partite-set-sizes},
$|U|+|V|+|W| \leq |V(D)| - 4=5$.
Therefore
\begin{equation}
\label{eq:thm:no-orientation-5-partite-32211-0-0}
|N^+(u) \cap V_1|=|N^+(u) \cap V_4|= |N^+(u) \cap V_5|=1
 \end{equation}
 or
 \begin{equation}
 \label{eq:thm:no-orientation-5-partite-32211-0-1}
  |N^+(u) \cap V_i|=|N^+(u) \cap V_j|=|N^+(u) \cap
  V_k|=1
  \end{equation}
 for $2 \leq i < j < k \leq 5$.
We first show that each vertex in $V_4 \cup V_5$ has outdegree at least $4$.

Suppose, to the contrary, that
 $V_4 \cup V_5$ contains a vertex of outdegree at most $3$. Then, by Proposition~\ref{prop:char-competi-digraph}(3), the vertex has outdegree $3$.
We may regard it as $u$ since $u$ is a vertex of outdegree $3$ arbitrarily chosen.
Without loss of generality, we may assume $u \in V_5$.
Then $N^+(u) \cap V_5 =\emptyset$.
Therefore \eqref{eq:thm:no-orientation-5-partite-32211-0-0} cannot happen and so
\eqref{eq:thm:no-orientation-5-partite-32211-0-1} holds.
Thus, without loss of generality, we may assume that $v_1 \in V_2$, $v_2 \in V_3$, and $v_3 \in V_4$.
By ($\S$),
$V_1=\{w_1,w_2,w_3\}$.
Let $V_2=\{v_1,x_1\}$ and $V_3=\{v_2,x_2\}$.
Then \[N^-(u)=\{w_1,w_2,w_3,x_1,x_2\}.\]
Since $x_1$ is the only possible common out-neighbor of each of pairs $\{v_2,w_2\}$ and $\{v_2,w_3\}$,
\begin{equation} \label{eq:prop:no-orientation-5-partite-32211-1}
\{v_2,w_2,w_3\} \subseteq N^-(x_1).\end{equation}
In addition, $x_2$ is the only possible common out-neighbor of each of pairs $\{v_1,w_1\}$ and $\{v_1,w_2\}$.
Therefore $\{v_1,w_1,w_2\} \subseteq N^-(x_2).$
By the way, $x_1$ and $x_2$ are the only possible common out-neighbors of $v_3$ and $w_3$.
If $x_2$ is a common out-neighbor of $v_3$ and $w_3$,
then $\{v_1,v_3,w_1,w_2,w_3\} \subseteq N^-(x_2)$ and so $N^+(x_2)\subseteq \{u,x_1\}$, which contradicts Proposition~\ref{prop:char-competi-digraph}(3).
Therefore $x_1$ is a common out-neighbor of $v_3$ and $w_3$.
Then $\{v_2,v_3,w_2,w_3\} \subseteq N^-(x_1)$ by~\eqref{eq:prop:no-orientation-5-partite-32211-1}, so
 $N^+(x_1) \subseteq \{u,w_1,x_2\}$.
 Thus $N^+(x_1) = \{u,w_1,x_2\}$ by Proposition~\ref{prop:char-competi-digraph}(3).
However, since $\{w_1 ,x_2\} \subset N^-(u)$,
$N^+(x_1)$ cannot form a directed cycle, which contradicts Proposition~\ref{prop:char-competi-digraph}(3).
Hence $u \notin V_4 \cup V_5$ and we reach a contradiction.
Therefore
\begin{equation} \label{eq:prop:no-orientation-5-partite-32211-2}
|N^+(v)| \geq 4
\end{equation}
for each vertex $v$ in $V_4 \cup V_5$.

Now we show that each of $V_2$ and $V_3$ has exactly one vertex of outdegree $3$, which implies that each vertex of $V_1$ has outdegree $3$.
Since $D$ has at least $5$ vertices of outdegree $3$ by ($\dagger$), $V_2 \cup V_3$ has at least $2$ vertices of outdegree $3$ by~\eqref{eq:prop:no-orientation-5-partite-32211-2}.
Take a vertex of outdegree $3$ in $ V_2 \cup V_3$. Then we may regard it as $u$.
Take a vertex $v$ of outdegree $3$ distinct from $u$ in $V_2 \cup V_3$.
Then, $N^+(x) \cap V_2 =\emptyset$ or $N^+(x) \cap V_3 =\emptyset$ for each vertex $x$ in $\{u,v\}$, so, by~\eqref{eq:thm:no-orientation-5-partite-32211-0-0} and \eqref{eq:thm:no-orientation-5-partite-32211-0-1},
\[ |N^+(u) \cap V_i|=|N^+(u) \cap V_4|=|N^+(u) \cap V_5|=1 \]
for some $i \in \{1,2,3\}$
and
\[ |N^+(v) \cap V_{j}|=|N^+(v) \cap V_{4}|=|N^+(v) \cap V_{5}|=1 \]
for some $j \in \{1,2,3\}$.
Thus, since $|V_4|=|V_5|=1$,
the vertices in $V_4 \cup V_5$ are common out-neighbors of $u$ and $v$ and we may assume that
\[ N^+(v)=\{v'_1,v_2,v_3\}\]
for some vertex $v'_1$ in $D$, $V_4=\{v_2\}$, and $V_5=\{v_3\}$ by symmetry.
Since $(v_2,v_3)\in A(D)$,
\[ (v'_1,v_2) \in A(D)\]
by Proposition~\ref{prop:char-competi-digraph}(3).
Therefore
\begin{equation} \label{eq:prop:no-orientation-5-partite-32211-001}
\{v,v'_1\} \subseteq N^-(v_2).
\end{equation}
To reach a contradiction, we suppose that $u$ and $v$ are contained in the same partite set.
Without loss of generality, we may assume $\{u,v\}\subseteq V_2$, Then $V_2=\{u,v\}$.
Suppose $v_1 =v'_1$. Then $N^+(u)=N^+(v)$ and $N^-(u)=N^-(v)$.
Therefore any pair of vertices having $v$ as a common out-neighbor has $u$ as a common out-neighbor.
Then, since  $D$ is competitive, $D-u$ is competitive.
However, $D-u$ is an orientation of $K_{3,1,2,1,1}$, which contradicts Proposition~\ref{prop:n.b.of.size.1}.
Therefore $v_1 \neq v'_1$.
Thus \[N^+(u) \cap N^+(v)=\{v_2,v_3\}.\]
If $v_1 \in V_1$, then
$N^+(u)\subset V_1 \cup V_4 \cup V_5$ and so, by ($\S$), $v=w_i$ for some $i \in \{1,2,3\}$, which contradicts $\{v_2,v_3\}\subseteq N^+(v)$.
Therefore $v_1 \notin V_1$ and so $v_1 \in V_3$.
Thus $N^+(u)\subset V_3\cup V_4 \cup V_5$ and so $V_1=\{w_1,w_2,w_3\}$ by ($\S$).
We may show that, by applying the same argument to $v'_1$, $v'_1 \notin V_1$.
Then $v'_1 \in V_3$, so
$\{v_1,v'_1 \} \subseteq  V_3$.
Therefore $V_3=\{v_1,v'_1 \} $.
We know from $D_1$ that $\{u,v_1,w_3\} \subseteq N^-(v_2) $.
Moreover, $\{v,v'_1\} \subseteq N^-(v_2)$ by~\eqref{eq:prop:no-orientation-5-partite-32211-001}.
Thus
 $\{u,v,v_1,v'_1,w_3 \} \subseteq N^-(v_2)$ and so $N^+(v_2) \subseteq \{v_3, w_1,w_2\}$.
Hence $N^+(v_2) =\{v_3, w_1,w_2\}$ by Proposition~\ref{prop:char-competi-digraph}(3).
However, $w_1$ and $w_2$ belong to the same partite set $V_1$, which contradicts Proposition~\ref{prop:char-competi-digraph}(3).
Therefore $u$ and $v$ belong to the distinct partite sets.
Since $u$ and $v$ were vertices of outdegree $3$ arbitrarily chosen, each of $V_2$ and $V_3$ has at most one vertex of outdegree $3$.
By the way, $V_2 \cup V_3$ has at least $2$ vertices of outdegree $3$, so
we may conclude that
each of $V_2$ and $V_3$ has exactly one vertex of outdegree $3$.
Thus each vertex of $V_1$ has outdegree $3$ by ($\dagger$).

Without loss of generality,
we may assume that $u \in V_2$, $v \in V_3$, and
\[(u,v) \in A(D). \]
Then $v_1=v$.
Therefore
$ |N^+(u) \cap V_3|=|N^+(u) \cap V_4|=|N^+(u) \cap
  V_5|=1 $ by~\eqref{eq:thm:no-orientation-5-partite-32211-0-1}.
If $w_2$, which is a common out-neighbor of $v_2$ and $v_3$, is contained in $V_1$,
then $N^+(w_2) \subseteq V_2 \cup V_3$ and so, by Proposition~\ref{prop:two-partite-out-neighbor-2-2}, $w_2$ has outdegree at least $4$, which is a contradiction to the fact that each vertex of $V_1$ has outdegree $3$.
Therefore $w_2 \in V_2$ by ($\S$).
Then
\[V_2=\{u,w_2\}.\]
Thus $\{w_1,w_3 \} \subset V_1$ by ($\S$) and so each of $w_1$ and $w_3$ has outdegree $3$.
Let
\[V_1=\{w_1,w_3,z\} \quad \text{and} \quad V_3=\{v_1,y\}\] for some vertices $y$ and $z$ in $D$.
 We know from $\tilde{D}$ given in Figure~\ref{fig:subdigraph-degree3} that $N^+(w_1) \cap \{v_1,v_2,v_3\}=\{v_3\}$ and $N^+(w_3)\cap \{v_1,v_2,v_3\}=\{v_2\}$.
Since each of $w_1$ and $w_3$ has outdegree $3$, the out-neighbors of $w_i$ belong to distinct partite sets for $i=1,3$ by Proposition~\ref{prop:char-competi-digraph}(3). By recalling that $N^+(u)=\{v_1,v_2,v_3\}$, we may conclude that  $N^+(w_1)=\{u,v_3,y\}$ and $N^+(w_3)=\{u,v_2,y\}$.
Since $(u,v_2) \in A(D)$ and $(u,v_3)\in A(D)$, $(v_2,y)\in A(D)$ and $(v_3,y)\in A(D)$ by the same lemma.
Therefore $\{v_2,v_3,w_1,w_3\} \subseteq N^-(y)$ and so $N^+(y) \subseteq \{u,w_2,z\}$.
Thus $N^+(y)= \{u,w_2,z\}$ by Proposition~\ref{prop:char-competi-digraph}(3).
However, there is no arc between $u$ and $w_2$ and so $N^+(y)$ cannot form a directed cycle, which contradicts Proposition~\ref{prop:char-competi-digraph}(3).
\end{proof}

\begin{figure}
  \centering
    \begin{equation*}
     A_8=
\left(\begin{array}{*{10}c}
  0& 0& 0& 1& 0& 0& 0& 1& 1& 1\\
  0& 0& 0& 0& 1& 1& 0& 1& 0& 1\\
  0& 0& 0& 1& 0& 1& 1& 0& 1& 0\\
  0& 1& 0& 0& 0& 0& 0& 0& 1& 1\\
  1& 0& 1& 0& 0& 0& 0& 1& 1& 0\\
  1& 0& 0& 0& 0& 0& 1& 0& 0& 1\\
  1& 1& 0& 1& 1& 0& 0& 0& 0& 0\\
  0& 0& 1& 1& 0& 1& 0& 0& 0& 1\\
  0& 1& 0& 0& 0& 1& 1& 1& 0& 0\\
  0& 0& 1& 0& 1& 0& 1& 0& 1& 0
   \end{array}
    \right)
    \end{equation*}
  \begin{equation*}
    A_9=
\left(\begin{array}{*{9}c}
 0 & 0 & 1 &0 &1 &0 &0 &0 &1 \\
  0 & 0 & 0 &1 &0 &1 &0 &0 &1 \\
  0&1&0&0&0&0&1&0&1\\
  1&0&0&0&0&0 &0 &1 &1\\
  0&1 &1 &1 &0 &0 &0 &1 &0\\
  1 &0 &1 &1 &0 &0 &1 &0 &0\\
  1 &1 &0 &1 &1 &0 &0 &0 &0\\
  1 &1 &1 &0 &0 &1 &0 &0 &0\\
  0 &0 &0 &0 &1 &1 &1 &1 &0\\
   \end{array}
    \right)
    \end{equation*}
      \caption{The adjacency matrices $A_8$ and $A_9$ which are orientations of $K_{3,3,2,1,1}$, $K_{2,2,2,2,1}$ respectively, in the proof of Theorem~\ref{thm:complete-5-partite}.}
      \label{fig:5-partite-complete}
    \end{figure}

Now we are ready to prove Theorem~\ref{thm:complete-5-partite}.

\begin{proof}[Proof of Theorem~\ref{thm:complete-5-partite}]
To show the ``only if'' part, suppose that there exists a competitive orientation $D$ of $K_{n_1,n_2,\ldots,n_5}$.
By Proposition~\ref{prop:n.b.of.size.1},
 \[n_3 \geq 2.\]

If $n_4 \geq 2$, then (c) holds.
Now suppose $n_4=1$.
 Then $n_5=1$.
 Suppose, to the contrary, that $n_1=2$.
 Then $D$ is an orientation of $K_{2,2,2,1,1}$.
Since $4|V(D)|-|A(D)|=7 > 0$, $D$ has a vertex of outdegree 3 by Proposition~\ref{prop:char-competi-digraph}(4).
Therefore $|V(D)| \geq 9$ by Theorem~\ref{thm:complete-condition-outdegree-3}, which is impossible.
Thus \[n_1 \geq 3.\]
If $n_1 \geq 4$, then (b) holds.
Now suppose $n_1=3$.
Then $n_2 \leq 3$.
If $n_2=2$, then $n_3=2$ and so $D$ is an orientation of $K_{3,2,2,1,1}$, which contradicts Theorem~\ref{thm:no-orientation-5-partite-32211}.
Therefore
$n_2 = 1$ or $3$.
Then, since $n_2 \geq n_3 \geq 2$,
$n_2 =3$ and (a) holds.
Thus we have shown the ``only if" part.

Now we show the ``if" part.
Let $D_\alpha$, $D_\gamma$ be the digraphs whose adjacency matrix are $A_8$ and $A_9$, respectively, given in Figure~\ref{fig:5-partite-complete}.
Then $D_\alpha$ and $D_\gamma$ are orientations of $K_{3,3,2,1,1}$ and $K_{2,2,2,2,1}$, respectively.
Let $D^*_\beta$ be the digraph whose adjacency matrix is $A_6$ given in Figure~\ref{fig:4-partite-complete}.
It is easy to check that the inner product of each pair of rows in each matrix is nonzero, so  $D_\alpha$, $D_\gamma$, and $D^*_\beta$ are competitive.
By Lemma~\ref{lem:dividing-partite-set}, we obtain a competitive orientation $D_\beta$ of $K_{4,2,2,1,1}$ from $D^*_\beta$.
By applying Corollary~\ref{lem:making-multi-complete} to $D_\alpha$, $D_\beta$, and $D_\gamma$,
we may obtain orientations $D'_\alpha$, $D'_\beta$ , and $D'_\gamma$ of $K_{n_1,n_2,\ldots,n_5}$ each of which are competitive for (a) $n_1=3$, $n_2=3$, $n_3 \geq 2$, $n_4=1$, and $n_5=1$;
(b) $n_1\geq 4$, $n_2 \geq n_3 \geq 2$, $n_4 =1$, and $n_5=1$;
(c) $n_4 \geq 2$, respectively.
Therefore we have shown that the ``if" part is true.
\end{proof}
\section{Closing remarks}
\label{sec:completecompetition}
By Corollary~\ref{cor:no-biparte-complete},
there is no complete graph that is the competition graph of a bipartite tournament.
For an integer $k \geq 3$,
Proposition~\ref{prop:k>=7-complete}, and Theorems~\ref{thm:complete-3-partite},~\ref{thm:complete-4-partite},~\ref{thm:complete-5-partite},~\ref{thm:complete-6-partite} may be summarized in the aspect of the number of vertices of a complete graph which is the competition graph of a $k$-partite tournament as follows.

\begin{Thm} \label{Thm:summary:K_n-k-partite}
A complete graph $K_n$ is the competition graph of a $k$-partite tournament for some integer $k\geq 3$ if and only if
\[
 \begin{cases}
    n \geq 13  & \mbox{ if $k=3$;}   \\
  n \geq 10 & \mbox{ if $k=4$;}
\\
 n \geq 9 & \mbox{ if $k\in \{5,6\}$;} \\
 n \geq k  & \mbox{ if $k\geq 7$.}
  \end{cases}
 \]

\end{Thm}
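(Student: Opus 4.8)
The plan is to reduce the statement to the classification results already obtained in the previous sections. Observe first that $K_n$ is the competition graph of a $k$-partite tournament if and only if some complete $k$-partite graph $K_{n_1,\ldots,n_k}$ with $n_1+\cdots+n_k=n$ and every $n_i\ge 1$ admits an orientation whose competition graph is complete. Re-ordering the parts we may assume $n_1\ge\cdots\ge n_k$, and then Proposition~\ref{prop:k>=7-complete} (for $k\ge 7$) together with Theorems~\ref{thm:complete-3-partite},~\ref{thm:complete-4-partite},~\ref{thm:complete-5-partite}, and~\ref{thm:complete-6-partite} (for $k=3,4,5,6$) tell us exactly which sorted tuples $(n_1,\ldots,n_k)$ are \emph{admissible}. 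Thus the theorem is equivalent to determining, for each $k$, the set $S_k$ of integers realizable as $n_1+\cdots+n_k$ for an admissible tuple, and checking that $S_k$ is the half-line displayed.

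First I would record a monotonicity observation: if $(n_1,\ldots,n_k)$ is admissible, then so is $(n_1+1,n_2,\ldots,n_k)$. For $k\ge 7$ this is immediate from Proposition~\ref{prop:k>=7-complete}; for $3\le k\le 6$ it follows from Lemma~\ref{lem:making-multi-complete} applied with $l=k$ and only the largest part enlarged, since the hypotheses $n'_1\ge\cdots\ge n'_k$ and $n'_i\ge n_i$ are plainly preserved when we only increase $n_1$. Hence each $S_k$ is an up-set in $\ZZ$, so it suffices to compute $\min S_k$: every larger value is reached from a minimizing tuple by repeatedly adding $1$ to $n_1$, and the ``only if'' direction is automatic, since a realization of $K_n$ forces an admissible tuple and therefore $n\ge\min S_k$.

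Then I would compute $\min S_k$ case by case from the admissible families. For $k\ge 7$, $\min S_k=k$, attained by $(1,\ldots,1)$. For $k=6$, the three families of Theorem~\ref{thm:complete-6-partite} have minimum sums $10$, $9$, and $9$ respectively (the value $9$ being realized by $(3,2,1,1,1,1)$ in family (b) and by $(2,2,2,1,1,1)$ in family (c)), so $\min S_6=9$. For $k=5$, the families of Theorem~\ref{thm:complete-5-partite} give minimum sums $10$, $10$, and $9$, the last realized by $(2,2,2,2,1)$, so $\min S_5=9$. For $k=4$, Theorem~\ref{thm:complete-4-partite} gives $11$, $10$, and $11$, with $10$ realized by $(4,2,2,2)$, so $\min S_4=10$. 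For $k=3$, Theorem~\ref{thm:complete-3-partite} forces $n_1\ge 5$ and $n_3\ge 4$, hence $n\ge 5+4+4=13$, attained by $(5,4,4)$, so $\min S_3=13$. Inserting these thresholds into the up-set description of $S_k$ produces precisely the asserted case list.

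This is bookkeeping rather than mathematics, and I do not expect a genuine obstacle: all of the substance already resides in the preceding sections, and Theorem~\ref{Thm:summary:K_n-k-partite} is just their vertex-count reformulation. The only point deserving care is the minimization step itself, which is error-prone: for each admissible family one must verify both that the witnessing tuple actually belongs to the family and that adding $1$ to $n_1$ never breaks the sorted condition $n_1\ge n_2\ge\cdots$. Here this is routine, since each of the witnesses $(1,\ldots,1)$, $(2,2,2,1,1,1)$, $(2,2,2,2,1)$, $(4,2,2,2)$, $(5,4,4)$ already has $n_1$ as a (weak) maximum, so enlarging $n_1$ keeps us inside the same family.
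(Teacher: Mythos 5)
Your proposal is correct and follows essentially the same route as the paper: both directions reduce to Proposition~\ref{prop:k>=7-complete} and Theorems~\ref{thm:complete-3-partite}, \ref{thm:complete-4-partite}, \ref{thm:complete-5-partite}, \ref{thm:complete-6-partite}, with the thresholds $13,10,9,9,k$ obtained by minimizing the part-sums over the admissible families. The only difference is presentational: you make explicit, via Lemma~\ref{lem:making-multi-complete}, the monotonicity argument showing every $n$ above the threshold is realized, a step the paper leaves implicit in its appeal to the same theorems.
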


\begin{proof}
For an integer $k\geq 3$, suppose that
a complete graph $K_n$ is the competition graph of a $k$-partite tournament which is an orientation of $K_{n_1,n_2,\ldots,n_k}$.
Then it is competitive and $n_1 + n_2 + \cdots + n_k=n$.
If $k=3$, then $\sum_{i=1}^{3}n_i \geq 13$ by Theorem~\ref{thm:complete-3-partite}.
If $k=4$, then $\sum_{i=1}^{4}n_i \geq 10$ by Theorem~\ref{thm:complete-4-partite}.
If $k=5$, then $\sum_{i=1}^{5}n_i \geq 9$ by Theorem~\ref{thm:complete-5-partite}.
If $k=6$, then $\sum_{i=1}^{6}n_i \geq 9$ by Theorem~\ref{thm:complete-6-partite}.
If $k\geq 7$, then $\sum_{i=1}^{k}n_i \geq k$ by Proposition~\ref{prop:k>=7-complete}.
Each of the above theorems also guarantees the existence of a competitive $k$-partite tournament for the corresponding $k$, so the ``if" part is true.
\end{proof}

As we mentioned previously, there is no graph of order $n$ which is competitively orientable for any integer $3 \le n \le 6$.
For $n \ge 7$, if a graph $G$ of order $n$ is competitively orientable, then $G$ must have at least $3n$ edges by Theorem~\ref{thm:char-orient-graph}(3).
Furthermore, we showed that for each $m \geq 7$, there is a competitively orientable graph of order $m$ with exactly $3m$ edges  in Remark~\ref{rmk:edges-vertices}.
However, for a complete multipartite graph, we doubt that there is a proper spanning subgraph which is competitively orientable because the matrices which we adapted to construct competitive orientations of complete multipartite graphs seem to represent minimal competitive digraphs.

\section{Acknowledgement}
This research was supported by the National Research Foundation of Korea(NRF)  (NRF-2017R1E1A1A03070489 and 2016R1A5A1008055) funded by the Korea government(MSIP).


\end{document}